\newtheorem{theorem}{Theorem}[section]
\newtheorem{corollary}{Corollary}[section]
\newtheorem{remark}{Remark}[section]
\newtheorem{definition}{Definition}[section]
\begin{document}
\title{{\Large \bf A lattice-theoretic approach to the Bourque-Ligh conjecture}}
\author{{\sc Ismo Korkee, Mika Mattila and Pentti Haukkanen}
\\School of Information Sciences
\\FI-33014 University of Tampere, Finland
\\E-mail:  ismo.korkee@taokk.tamk.fi, mika.mattila@uta.fi,\\pentti.haukkanen@uta.fi}
\date{August 31, 2013}
\maketitle
\setcounter{section}{0}
\setcounter{equation}{0}
\noindent{\bf Abstract } The Bourque-Ligh conjecture states that if $S=\{x_1,x_2,\ldots,x_n\}$ is a gcd-closed set of positive integers with distinct elements, then the LCM matrix $[S]=[\hbox{lcm}(x_i,x_j)]$ is invertible. It is well known that this conjecture holds for $n\leq7$ but does not generally hold for $n\geq8$. In this paper we provide a lattice-theoretic explanation for this solution of the Bourque-Ligh conjecture. In fact, let $(P,\leq)=(P,\land,\lor)$ be a lattice, let $S=\{x_1,x_2,\ldots,x_n\}$ be a subset of $P$ and let $f:P\to{\mathbb C}$ be a function. We study under which conditions the join matrix $[S]_f=[f(x_i\lor x_j)]$ on $S$ with respect to $f$ is invertible on a meet closed set $S$ (i.e.,~$x_i,x_j\in S\Rightarrow x_i\land x_j\in S)$.

\medskip\noindent{\it Key words and phrases}: Meet matrix, Join matrix, Semimultiplicativity, GCD matrix, LCM matrix
\noindent{\it AMS Subject Classification:} 11C20, 15A36

\section{Introduction}
\label{sect:1}
\setcounter{equation}{0}

Let $(P,\leq)=(P,\land,\lor)$ be a lattice, let $S=\{x_1,x_2,\ldots,x_n\}$ be a subset of $P$ and let $f:P\to{\mathbb C}$ be a function. The meet matrix $(S)_f$ and the join matrix $[S]_f$ on $S$ with respect to $f$ are defined by $((S)_f)_{ij}=f(x_i\land x_j)$ and $([S]_f)_{ij}=f(x_i\lor x_j)$. Rajarama Bhat \cite{Raja} and Haukkanen \cite{Hau96} introduced meet matrices and Kor\-kee and Haukkanen \cite{KorHau2} defined join matrices. Explicit formulae for the determinant and the inverse of meet and join matrices are presented in \cite{Hau96,KorHau1,KorHau2,Raja} (see also \cite{AltSagTug,HongSun}). Most of these formulae are presented on meet closed sets $S$ (i.e.,~$x_i,x_j\in S\Rightarrow x_i\land x_j\in S)$ and join-closed sets $S$ (i.e.,~$x_i,x_j\in S\Rightarrow x_i\lor x_j\in S)$. More recently Kor\-kee and Haukkanen \cite{KorHau6} presented a method for calculating $\det(S)_f$, $(S)_f^{-1}$, $\det[S]_f$ and $[S]_f^{-1}$ on all sets $S$ and functions $f$. It is well known that $({\mathbb Z}_+,\mid)=({\mathbb Z}_+,\hbox{gcd},\hbox{lcm})$ is a lattice, where $\mid$ is the usual divisibility relation and gcd and lcm stand for the greatest common divisor and the least common multiple of integers. Thus meet and join matrices are generalizations of GCD matrices ${((S)_f)_{ij}}={f(\hbox{gcd}(x_i,x_j))}$ and LCM matrices $([S]_f)_{ij}=f(\hbox{lcm}(x_i,x_j))$, where $f$ is an arithmetical function. If $f=N$, where $N(m)=m$ for all positive integers $m$, then we denote $(S)_f=(S)$ and $[S]_f=[S]$. The study of GCD and LCM matrices is considered to have begun in 1876, when Smith \cite{Smi} presented his famous determinant formulae. The GCUD and LCUM matrices, which are unitary analogues of GCD and LCM matrices, are also special cases of meet and join matrices, see \cite{HauIlNalSil,HauSil96,Kor8}. For general accounts of meet and join matrices and their number-theoretic special cases, see \cite{HauWanSil,KorHau2,Sandor}.

Bourque and Ligh \cite{Bour92} conjectured that the LCM matrix $[S]$ on any gcd-closed set is invertible. Haukkanen, Wang and Sillanp\"a\"a \cite{HauWanSil} were the first to show that the conjecture does not hold (giving a counterexample with $n=9$). Hong \cite{Hong99} solved the conjecture completely in the sense that it holds for $n\leq7$ and does not hold generally for $n\geq8$. Subsequently he also presented some conjectures on his own \cite{Hong281,HongRfold,Li}.

In this paper we study a lattice-theoretic generalization of the Bourque-Ligh conjecture, i.e.,~under which conditions the join matrix $[S]_f$ is invertible on a meet closed set $S$. We use the concept of covering to develop an inductive method for inserting an element to $S$ so that the invertibility of the join matrix on the extended set is preserved. We apply this method to explain in terms of lattice theory why $n=7$ is the greatest integer for which the original Bourque-Ligh conjecture holds.

\section{Preliminaries}
\label{sect:2}
\setcounter{equation}{0}

Let $(P,\leq)$ be a locally finite poset and let $g$ be an incidence function of $P$, that is, $g$ is a complex-valued function on $P\times P$ such that $g(x,y)=0$ whenever $x\not\leq y$. If $h$ is also an incidence function of $P$, the sum $g+h$ is defined by $(g+h)(x,y)=g(x,y)+h(x,y)$ and the convolution $g*h$ is defined by ${(g*h)(x,y)}=\sum_{x\leq z\leq y}g(x,z)h(z,y)$. The set of all incidence functions of $P$ under addition and convolution forms a ring with unity, where the unity $\delta$ is defined by $\delta(x,y)=1$ if $x=y$, and $\delta(x,y)=0$ otherwise. The zeta incidence function $\zeta$ is defined by $\zeta(x,y)=1$ if $x\leq y$, and $\zeta(x,y)=0$ otherwise. The M\"obius function $\mu$ of $P$ is the inverse of $\zeta$ (with respect to the convolution).

In this paper let $(P,\leq)=(P,\land,\lor)$ always be a lattice such that the principal order ideal $\downarrow\hspace{-1.5mm}x={\{y\in P\ \vert\ y\leq x\}}$ is finite for each $x\in P$. Then $P$ has the least element, which we denote by $0$. The order ideal generated by $S$ is ${\downarrow\hspace{-1mm} S}={\{z\in P\mid \exists x\in S:z\leq x\}}$, see \cite{Bir}. Let $f$ always be a complex-valued function on $P$ and let $S$ be a finite subset of $P$, where $S=\{x_1,x_2,\ldots,x_n\}$ with $x_i<x_j\Rightarrow i<j$. We say that $S$ is an $a$-set if $x_i\land x_j=a$ for all $i\neq j$. We say that $S$ is lower-closed if $(x_i\in S,{y\in P},y\leq x_i)\Rightarrow y\in S$. We say that $S$ is meet closed if $x_i,x_j\in S\Rightarrow x_i\land x_j\in S$. It is clear that a lower-closed set is always meet closed but the converse does not hold.

\begin{definition}\label{def:2.2} We say that $f$ is a semimultiplicative function on $P$ if
\begin{equation}\label{eq:2.1}
f(x)f(y)=f(x\land y)f(x\lor y)
\end{equation}
for all $x,y\in P$.
\end{definition}

The concept of a semimultiplicative function on $P$ is a generalization of the concept of a semimultiplicative arithmetical function, see \cite[p.\ 49]{Rea} or \cite[p.~237]{Siva}. Let $f(x)\neq0$ for all $x\in P$. Then the function $\frac{1}{f}$ on $P$ is defined by $\left(\frac{1}{f}\right)(x)={1}/{f(x)}$. If $g$ is an incidence function of $P$, the incidence function $\frac{1}{g}$ of $P$ is defined similarly. One can easily show that $f$ is semimultiplicative if and only if $\frac{1}{f}$ is semimultiplicative. We associate each $f(z)$ with incidence function value $f(0, z)$. For example, by $(f*\mu)(z)$ we mean the convolution $$(f*\mu)(0,z)=\sum_{0\leq w\leq z}f(0,w)\mu(w,z).$$

\section{An inductive method}
\label{sect:3}
\setcounter{equation}{0}

In this section we provide an inductive method for constructing meet closed sets $S$ on which join matrices $[S]_f$ are nonsingular under certain conditions on $f$. The inductive method arises from the idea to construct meet closed sets element by element from the bottom up, see Definition \ref{def:3.1}.

{\sl Throughout the rest of this paper $(P,\leq)=(P,\land,\lor)$ is a lattice,  $S=\{x_1,x_2,\ldots,x_n\}$ is a meet closed subset of $P$ such that $x_i<x_j\Rightarrow i<j$ holds and $f$ is a semimultiplicative function on $P$ such that $f(x)\neq0$ for all $x\in P$.} 

Now, by using the semimultiplicativity of $f$, we may write 
\begin{equation}\label{eq:3.3}
[S]_f=\Delta_{S,f}(S)_{\frac{1}{f}}\Delta_{S,f},
\end{equation}
where $\Delta_{S,f}=\mathrm{diag}(f(x_1),f(x_2),\ldots,f(x_n))$ (see \cite[Theorem 6.1]{AltTugHau}, \cite[Theorem 6.1]{MatHau1} and \cite[Lemmas 5.1 and 5.2]{KorHau2}). Since $f(x_i)\neq0$ for all $i=1,2,\ldots,n$, the matrix $\Delta_{S,f}$ is clearly invertible. Therefore $[S]_f$ is invertible if and only if $(S)_{\frac{1}{f}}$ is invertible.

Let $S_i=\{x_1, x_2,\ldots, x_i\}$ for $i=1,2,\ldots,n$. Then $S_1\subset S_2\subset \cdots\subset S_n=S$ is a finite sequence of meet closed sets on $(P,\le)$ and lower-closed sets on $(S,\le)$. The values of the corresponding M\"obius function $\mu_S$ can be easily evaluated by using the recursion 
\begin{eqnarray}\label{eq:mu_Srec}
&&\mu_S(x_i, x_i)=1,\\
&&\mu_S(x_i, x_j)=-\sum_{k=i}^{j-1} \mu_S(x_i, x_k)=-\sum_{k=i+1}^{j} \mu_S(x_k, x_j),\ i<j, \nonumber
\end{eqnarray}
see \cite[p.~141]{Aig} or \cite[p.~116]{Stan}. Note that $\mu_S=\mu_{S_{i}}$ on $(S_{i},\le)$ and the convolutions on $(S_{i},\le)$ and  $(S,\le)$ are equal if the arguments belong to $S_{i}$. Thus for each $i\geq2$ we have
\begin{align}\label{eq:detformula}
\det(S_{i})_{\frac{1}{f}}&=\prod_{k=1}^{i}\left(\tfrac{1}{f}\ast_S \mu_S\right)(x_k)=\left(\tfrac{1}{f}\ast_S \mu_S\right)(x_{i})\prod_{k=1}^{i-1}\left(\tfrac{1}{f}\ast_S \mu_S\right)(x_k)\notag\\
&=\left(\tfrac{1}{f}\ast_S \mu_S\right)(x_{i})\det(S_{i-1})_{\frac{1}{f}}
\end{align}
(see \cite[Theorem 4.2]{AltTugHau} and \cite[Corollary 2]{Hau96}).

From \eqref{eq:3.3} and \eqref{eq:detformula} we see that if $[S_{i}]_f$ is invertible, then also $(S_{i})_{\frac{1}{f}}$, $(S_{i-1})_{\frac{1}{f}}$ and $[S_{i-1}]_f$ are invertible. Conversely, let $[S_{i-1}]_f$ be invertible. We below consider which elements of $P$, denoted as $x_{i}$, could be added to $S_{i-1}$ so that also $[S_{i}]_f$ is invertible.

\begin{definition}\label{def:3.1}
Let $S_0=\emptyset$ and $i\geq 1$. Consider the sets $S_{i-1}$ and $S_i=S_{i-1}\cup\{x_{i}\}$.
\begin{itemize}
\setlength{\itemindent}{1.1em}
\item[\quad\rm(M$_{m_i,i}$)] Let $m_i$ be the greatest integer such that $x_{i_1},x_{i_2},\ldots,x_{i_{m_i}}\in S_{i-1}$ are covered by $x_{i}$ in $S_{i}$.
\end{itemize}
If {\rm(M$_{m_i,i}$)} holds, then we say that $S_{i}$ is constructed from $S_{i-1}$ by the method {\rm(M$_{m_i,i}$)}. Further, if
\begin{itemize}
\setlength{\itemindent}{1em}
\item[\rm(C$_{m_i,i}$)] $({\textstyle\frac{1}{f}}*_S\mu_S)(x_{i})\ne 0$,
\end{itemize}
then we say that $S_{i}$ is constructed from $S_{i-1}$ by the method {\rm(M$_{m_i,i}$)} under the condition {\rm(C$_{m_i,i}$)}.
\end{definition}

\begin{remark}
We always must have $m_1=0$ and $m_2=m_3=1$. For example, the condition {\rm(C$_{0,1}$)} only states the triviality $\frac{1}{f}(x_1)\neq0$ whereas {\rm(C$_{1,2}$)} means that $\frac{1}{f}(x_2)-\frac{1}{f}(x_1)\neq0$.
\end{remark}

\begin{theorem}\label{th:3.1}
Let $i\geq2$ and $S_{i}$ be constructed from $S_{i-1}$ by {\rm(M$_{m_i,i}$)} under {\rm(C$_{m_i,i}$)}. Then $[S_{i}]_f$ is invertible if and only if $[S_{i-1}]_f$ is invertible.
\end{theorem}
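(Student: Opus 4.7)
The statement is essentially a direct corollary of the two identities already in place, namely the diagonal conjugation \eqref{eq:3.3} and the recursive determinant formula \eqref{eq:detformula}. My plan is therefore to chain these together and let the nondegeneracy assumption (C$_{m_i,i}$) supply the nonzero scalar.

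First I would record, by applying \eqref{eq:3.3} separately on $S_i$ and on $S_{i-1}$ with the diagonal matrices $\Delta_{S_i,f}$ and $\Delta_{S_{i-1},f}$ (each invertible because $f$ does not vanish on $P$), that $[S_i]_f$ is invertible if and only if $(S_i)_{\frac{1}{f}}$ is invertible, and likewise $[S_{i-1}]_f$ is invertible if and only if $(S_{i-1})_{\frac{1}{f}}$ is invertible. This reduces the theorem to the corresponding statement about meet matrices.

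Next I would invoke \eqref{eq:detformula}, which gives
\[
\det (S_i)_{\frac{1}{f}} \;=\; \bigl(\tfrac{1}{f}\ast_S \mu_S\bigr)(x_i)\;\det (S_{i-1})_{\frac{1}{f}}.
\]
Here it is important that, as the paper points out, $\mu_S$ and $\mu_{S_i}$ agree on $S_i$ and the convolutions on $(S_i,\le)$ and $(S,\le)$ coincide for arguments in $S_i$, so the scalar factor $(\tfrac{1}{f}\ast_S\mu_S)(x_i)$ really is the one appearing in the hypothesis (C$_{m_i,i}$). By that hypothesis the factor is nonzero, so $\det(S_i)_{\frac{1}{f}}\neq 0$ precisely when $\det(S_{i-1})_{\frac{1}{f}}\neq 0$, i.e.\ $(S_i)_{\frac{1}{f}}$ is invertible if and only if $(S_{i-1})_{\frac{1}{f}}$ is invertible. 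Combined with the previous paragraph this yields the desired equivalence.

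\textbf{Where the real content lies.} There is no genuine obstacle inside the proof; the one point that deserves a sentence of care is the observation that the condition (M$_{m_i,i}$), concerning how many elements of $S_{i-1}$ are covered by $x_i$ in $S_i$, plays no direct role in this equivalence. Its purpose is only to index the hypothesis and to set up a usable expression for the scalar $(\tfrac{1}{f}\ast_S\mu_S)(x_i)$ in subsequent sections, where the covering data will be exploited to verify or rule out (C$_{m_i,i}$) in concrete situations. The theorem itself is, structurally, nothing more than the factorization \eqref{eq:3.3} together with the one-step recursion \eqref{eq:detformula}.
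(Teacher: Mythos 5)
Your proposal is correct and is exactly the paper's argument: the authors likewise prove the theorem as a direct consequence of \eqref{eq:3.3}, \eqref{eq:detformula} and Definition \ref{def:3.1}, and you have merely written out the chain of reductions explicitly.
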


\begin{proof}
Theorem \ref{th:3.1} is a direct consequence of \eqref{eq:3.3}, \eqref{eq:detformula} and Definition \ref{def:3.1}.
\end{proof}

The method (M$_{1,i}$) in Definition \ref{def:3.1} allows us to add an element $x_{i}$ above $x_{i_1}$ if $x_{i}$ covers $x_{i_1}$ in $S_{i}$. The method (M$_{2,i}$) allows us to join together two incomparable elements $x_{i_1},x_{i_2}$ with $x_{i}$ if $x_{i}$ covers both $x_{i_1}$ and $x_{i_2}$. The method (M$_{3,i}$) concerns three incomparable elements $x_{i_1},x_{i_2},x_{i_3}$ and so on. The condition (C$_{m_i,i}$) can be written as 

\begin{equation}\label{eq:Cm2}
\frac{1}{f(x_{i})}\ne -\sum_{k=1}^{i-1} {\textstyle\frac{1}{f}}(x_{k})\mu_S(x_k, x_{i}).
\end{equation}

For $m_i=1,2$ using the recursive properties of $\mu_S$, see $\eqref{eq:mu_Srec}$, we easily obtain 

\begin{itemize}
\setlength{\itemindent}{0.5em}
\item[\rm (C$_{1,i}$)] $\quad f(x_{i})\neq f(x_{i_1})$,
\item[\rm (C$_{2,i}$)] $\quad \frac{1}{f(x_{i})}\neq\frac{1}{f}(x_{i_1})+\frac{1}{f}(x_{i_2})-\frac{1}{f}(x_{i_1}\land x_{i_2})$.
\end{itemize}

By semimultiplicativity, (C$_{2,i}$) can be written without any meets as 

\begin{equation}\label{equ:3.7}
f(x_{i+1})\neq f(x_{i_1})f(x_{i_2})/[f(x_{i_1})+f(x_{i_2})-f(x_{i_1}\lor x_{i_2})]
\end{equation}
whenever the denominator is nonzero. Each meet closed set $S$ can be constructed inductively by a finite sequence (M$_{m_1,1}$), (M$_{m_2,2}$), $\ldots$ , (M$_{m_{n},n}$) (often there are multiple different ways to construct a given set $S$ but the sequence $(m_1,m_2,\ldots,m_n)$ is in fact unique up to ordering). Thus we have the following theorem.

\begin{theorem}\label{th:3.2}
Let $S$ be constructed inductively by a method sequence
$$(\mathrm{M}_{m_1,1}),(\mathrm{M}_{m_2,2}),\ldots,(\mathrm{M}_{m_{n},n}).$$
Then $[S]_f$ is invertible if and only if the condition sequence
$$(\mathrm{C}_{m_1,1}),(\mathrm{C}_{m_2,2}),\ldots,(\mathrm{C}_{m_{n},n})$$
holds.
\end{theorem}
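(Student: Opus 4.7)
The plan is to deduce Theorem \ref{th:3.2} by iterating Theorem \ref{th:3.1}, or equivalently by unfolding the determinant identity \eqref{eq:detformula}. Both approaches rest on the same underlying calculation, and each amounts to only a few lines since the real work has already been done in Section \ref{sect:3}.

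First I would apply \eqref{eq:detformula} repeatedly from $i=2$ up to $i=n$, obtaining the telescoped product
$$\det(S)_{\frac{1}{f}}=\prod_{k=1}^{n}\left(\tfrac{1}{f}\ast_S \mu_S\right)(x_k),$$
where the $k=1$ factor is simply $\frac{1}{f(x_1)}$, automatically nonzero by the blanket assumption on $f$ (this matches the triviality of $(\mathrm{C}_{0,1})$ noted in the remark). A finite product of complex numbers is nonzero precisely when every factor is nonzero, so $\det(S)_{\frac{1}{f}}\neq 0$ if and only if $\left(\tfrac{1}{f}\ast_S \mu_S\right)(x_k)\neq 0$ for every $k=1,\ldots,n$, which is exactly the condition sequence $(\mathrm{C}_{m_1,1}),(\mathrm{C}_{m_2,2}),\ldots,(\mathrm{C}_{m_n,n})$.

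To finish, I would invoke \eqref{eq:3.3}: since $\Delta_{S,f}$ is a nonsingular diagonal matrix, $[S]_f$ is invertible if and only if $(S)_{\frac{1}{f}}$ is invertible, which in turn is equivalent to $\det(S)_{\frac{1}{f}}\neq 0$. Chaining this with the previous equivalence yields the theorem. I do not anticipate any genuine obstacle; the statement is essentially a bookkeeping corollary of Theorem \ref{th:3.1}, the only mild point being to treat the base index $k=1$ separately, as above, so that the induction (or the product expansion) is well posed.
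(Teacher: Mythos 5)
Your proposal is correct and follows essentially the same route as the paper: the authors obtain Theorem \ref{th:3.2} by iterating Theorem \ref{th:3.1}, which itself rests on exactly the combination of \eqref{eq:3.3} and the product formula \eqref{eq:detformula} that you unfold into the telescoped determinant $\det(S)_{\frac{1}{f}}=\prod_{k=1}^{n}(\frac{1}{f}\ast_S\mu_S)(x_k)$. Your explicit handling of the $k=1$ factor matches the paper's remark that $(\mathrm{C}_{0,1})$ is the triviality $\frac{1}{f}(x_1)\neq 0$, so nothing is missing.
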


\section{Classification of functions on the basis of the used methods}\label{sect:4}\setcounter{equation}{0}

Let ${\cal F}$ denote the class of all semimultiplicative functions $f$ on $P$ such that $f(x)\neq0$ for all $x\in P$. We divide ${\cal F}$ into subclasses on the basis of the numbers $m_i=1, 2,\ldots$ in the method sequences $({\rm M}_{m_1,1}), ({\rm M}_{m_2,2}),\ldots,$ $({\rm M}_{m_{n},n})$. We introduce two kinds of subclasses ${\cal F}_k$ and ${\cal G}_{k, n}$. The classes ${\cal F}_k$ are smaller than the classes ${\cal G}_{k, n}$ and are introduced to get the presentation shorter. Let ${\cal S}_{k,n}$ denote the class of all meet closed subsets $S$ of $P$ possessing the structure as described in Figure 1. The white points in Figure 1 stand for the last added elements $x_n$. Note that although $x_k$ would be the supremum of $x_i$ and $x_j$ in $(S,\preceq)$, it does not necessarily represent the element $x_i\vee x_j\in P$. In the notation ${\cal S}_{k,n}$, the number $k$ comes from the last used method $({\rm M}_{k,n})$ in constructing the set $S\in {\cal S}_{k,n}$  (that is, the last added element $x_n$ covers $k$ but no more incomparable elements $x_{i_1}, x_{i_2},\ldots, x_{i_k}$ in $S$), and the letter $n$ just indicates the number of elements in $S\in {\cal S}_{k,n}$. For the pair $k=4, n=7$ we should distinguish two distinct classes ${\cal S}_{4, 7}^{(1)}$ and ${\cal S}_{4, 7}^{(2)}$. We are now in a position to define the function classes ${\cal G}_{k, n}$.

\begin{definition}
For each  ${\cal S}_{k,n}$ in Figure 1 let
$${\cal G}_{k, n}=\{f\in{\cal F} \mid \forall S\in {\cal S}_{k,n}:(\textstyle\frac{1}{f}*_S\mu_S)(x_n)\ne 0\}.$$
In addition,
$${\cal G}_{4, 7}^{(j)}=\{f\in{\cal F} \mid \forall S\in {\cal S}_{4, 7}^{(j)}: (\textstyle\frac{1}{f}*_S\mu_S)(x_7)\ne 0\},\ j=1, 2. $$
\end{definition}

The condition $(\textstyle\frac{1}{f}*\mu_S)(x_n)\ne 0$ means that the last condition $({\rm C}_{m_{n},n})$ in the condition sequence in Theorem \ref{th:3.2} holds.

\setcounter{subfigure}{0}
\begin{figure}[htb!]
\centering
\subfigure[${\cal S}_{1,2}$]
{{\scalefont{0.4}
\begin{tikzpicture}[scale=0.55]
\draw (0,1)--(0,1.9);
\draw [fill] (0,1) circle [radius=0.1];
\draw (0,2) circle [radius=0.1];
\node [right] at (0,1) {\textrm{-}$1$};
\node [right] at (0,2) {$1$};
\node [right] at (1,2) {\ };
\node [right] at (-1,2) {\ };
\end{tikzpicture}}
}
\subfigure[${\cal S}_{2,4}$]
{{\scalefont{0.4}
\begin{tikzpicture}[scale=0.55]
\draw (1,0)--(0.25,1)--(0.92,1.92);
\draw (1,0)--(1.75,1)--(1.08,1.92);
\draw [fill] (0.25,1) circle [radius=0.1];
\draw [fill] (1.75,1) circle [radius=0.1];
\draw [fill] (1,0) circle [radius=0.1];
\draw (1,2) circle [radius=0.1];
\node [right] at (1,0) {$1$};
\node [right] at (0.25,1) {\textrm{-}$1$};
\node [right] at (1.75,1) {\textrm{-}$1$};
\node [right] at (1,2) {$1$};
\end{tikzpicture}}
}
\subfigure[${\cal S}_{3,5}$]
{{\scalefont{0.4}
\begin{tikzpicture}[scale=0.55]
\draw (1,0)--(0,1)--(0.92,1.92);
\draw (1,0)--(1,1.9);
\draw (1,0)--(2,1)--(1.08,1.92);
\draw [fill] (0,1) circle [radius=0.1];
\draw [fill] (1,1) circle [radius=0.1];
\draw [fill] (2,1) circle [radius=0.1];
\draw [fill] (1,0) circle [radius=0.1];
\draw (1,2) circle [radius=0.1];
\node [right] at (1,0) {$2$};
\node [right] at (0,1) {\textrm{-}$1$};
\node [right] at (1,1) {\textrm{-}$1$};
\node [right] at (2,1) {\textrm{-}$1$};
\node [right] at (1,2) {$1$};
\end{tikzpicture}}
}
\subfigure[${\cal S}_{3,6}$]
{{\scalefont{0.4}
\begin{tikzpicture}[scale=0.55]
\draw (2,0)--(1.3,0.7)--(0.9,1.4)--(1.65,2.4);
\draw (1.3,0.7)--(1.7,1.4)--(1.7,2.4);
\draw (2,0)--(2.8,1.4)--(1.75,2.4);
\draw [fill] (2,0) circle [radius=0.1];
\draw [fill] (1.3,0.7) circle [radius=0.1];
\draw [fill] (0.9,1.4) circle [radius=0.1];
\draw [fill] (2.8,1.4) circle [radius=0.1];
\draw [fill] (1.7,1.4) circle [radius=0.1];
\draw (1.7,2.5) circle [radius=0.1];
\node [right] at (2,0) {$1$};
\node [right] at (1.3,0.7) {$1$};
\node [right] at (0.9,1.4) {\textrm{-}$1$};
\node [right] at (1.7,1.4) {\textrm{-}$1$};
\node [right] at (2.8,1.4) {\textrm{-}$1$};
\node [right] at (1.7,2.5) {$1$};
\end{tikzpicture}}
}
\subfigure[${\cal S}_{3,7}$]
{{\scalefont{0.4}
\begin{tikzpicture}[scale=0.55]
\draw (1,0)--(0,1)--(0,2)--(0.92,2.92);
\draw (0,1)--(1,2)--(1,2.9);
\draw (1,0)--(2,1)--(2,2)--(1.08,2.92);
\draw (2,1)--(1,2);
\draw [fill] (0,1) circle [radius=0.1];
\draw [fill] (2,1) circle [radius=0.1];
\draw [fill] (1,0) circle [radius=0.1];
\draw [fill] (0,2) circle [radius=0.1];
\draw [fill] (1,2) circle [radius=0.1];
\draw [fill] (2,2) circle [radius=0.1];
\draw (1,3) circle [radius=0.1];
\node [right] at (1,0) {$0$};
\node [right] at (0,1) {$1$};
\node [right] at (2,1) {$1$};
\node [right] at (0,2) {\textrm{-}$1$};
\node [right] at (1,2) {\textrm{-}$1$};
\node [right] at (2,2) {\textrm{-}$1$};
\node [right] at (1,3) {$1$};
\end{tikzpicture}}
}
\subfigure[${\cal S}_{3,8}$]
{{\scalefont{0.4}
\begin{tikzpicture}[scale=0.55]
\draw (1,0)--(0,1)--(0,2)--(0.92,2.92);
\draw (1,0)--(1,1)--(0,2);
\draw (0,1)--(1,2)--(1,2.9);
\draw (1,0)--(2,1)--(2,2)--(1.08,2.92);
\draw (1,1)--(2,2);
\draw (2,1)--(1,2);
\draw [fill] (0,1) circle [radius=0.1];
\draw [fill] (1,1) circle [radius=0.1];
\draw [fill] (2,1) circle [radius=0.1];
\draw [fill] (1,0) circle [radius=0.1];
\draw [fill] (0,2) circle [radius=0.1];
\draw [fill] (1,2) circle [radius=0.1];
\draw [fill] (2,2) circle [radius=0.1];
\draw (1,3) circle [radius=0.1];
\node [right] at (1,0) {\textrm{-}$1$};
\node [right] at (0,1) {$1$};
\node [right] at (1,1) {$1$};
\node [right] at (2,1) {$1$};
\node [right] at (0,2) {\textrm{-}$1$};
\node [right] at (1,2) {\textrm{-}$1$};
\node [right] at (2,2) {\textrm{-}$1$};
\node [right] at (1,3) {$1$};
\end{tikzpicture}}
}\subfigure[${\cal S}_{4,6}$]
{{\scalefont{0.4}
\begin{tikzpicture}[scale=0.55]
\draw (1,0)--(-0.25,1)--(0.92,1.92);
\draw (1,0)--(0.5,1)--(0.95,1.9);
\draw (1,0)--(1.5,1)--(1.05,1.9);
\draw (1,0)--(2.25,1)--(1.08,1.92);
\draw [fill] (-0.25,1) circle [radius=0.1];
\draw [fill] (0.5,1) circle [radius=0.1];
\draw [fill] (1.5,1) circle [radius=0.1];
\draw [fill] (1,0) circle [radius=0.1];
\draw [fill] (2.25,1) circle [radius=0.1];
\draw (1,2) circle [radius=0.1];
\node [right] at (1,0) {$3$};
\node [right] at (-0.25,1) {\textrm{-}$1$};
\node [right] at (1.5,1) {\textrm{-}$1$};
\node [right] at (0.5,1) {\textrm{-}$1$};
\node [right] at (2.25,1) {\textrm{-}$1$};
\node [right] at (1,2) {$1$};
\end{tikzpicture}}
}
\subfigure[${\cal S}_{4,7}^{(1)}$]
{{\scalefont{0.4}
\begin{tikzpicture}[scale=0.55]
\draw (2,0.5)--(1,1)--(0,2)--(1.42,2.92);
\draw (1,1)--(1,2)--(1.45,2.9);
\draw (2,0.5)--(3,2)--(1.58,2.92);
\draw (1,1)--(2,2)--(1.55,2.9);
\draw [fill] (2,0.5) circle [radius=0.1];
\draw [fill] (1,1) circle [radius=0.1];
\draw [fill] (0,2) circle [radius=0.1];
\draw [fill] (1,2) circle [radius=0.1];
\draw [fill] (2,2) circle [radius=0.1];
\draw [fill] (3,2) circle [radius=0.1];
\draw (1.5,3) circle [radius=0.1];
\node [right] at (1.5,3) {$1$};
\node [right] at (0,2) {\textrm{-}$1$};
\node [right] at (1,2) {\textrm{-}$1$};
\node [right] at (2,2) {\textrm{-}$1$};
\node [right] at (3,2) {\textrm{-}$1$};
\node [right] at (1.05,1.05) {$2$};
\node [right] at (2,0.5) {$1$};
\end{tikzpicture}}
}
\subfigure[${\cal S}_{4,7}^{(2)}$]
{{\scalefont{0.4}
\begin{tikzpicture}[scale=0.55]
\draw (2,0.5)--(1,1)--(0,2)--(1.42,2.92);
\draw (1,1)--(1,2)--(1.45,2.9);
\draw (2,0.5)--(3,2)--(1.58,2.92);
\draw (2,0.5)--(2,2)--(1.55,2.9);
\draw [fill] (2,0.5) circle [radius=0.1];
\draw [fill] (1,1) circle [radius=0.1];
\draw [fill] (0,2) circle [radius=0.1];
\draw [fill] (1,2) circle [radius=0.1];
\draw [fill] (2,2) circle [radius=0.1];
\draw [fill] (3,2) circle [radius=0.1];
\draw (1.5,3) circle [radius=0.1];
\node [right] at (1.5,3) {$1$};
\node [right] at (0,2) {\textrm{-}$1$};
\node [right] at (1,2) {\textrm{-}$1$};
\node [right] at (2,2) {\textrm{-}$1$};
\node [right] at (3,2) {\textrm{-}$1$};
\node [right] at (1.05,1.05) {$1$};
\node [right] at (2,0.5) {$2$};
\end{tikzpicture}}
}
\subfigure[${\cal S}_{5,7}$]
{{\scalefont{0.4}
\begin{tikzpicture}[scale=0.55]
\draw (1,0)--(-0.75,1)--(0.92,1.92);
\draw (1,0)--(0.2,1)--(0.92,1.92);
\draw (1,0)--(1,1.9);
\draw (1,0)--(1.8,1)--(1.08,1.92);
\draw (1,0)--(2.75,1)--(1.08,1.92);
\draw [fill] (0.2,1) circle [radius=0.1];
\draw [fill] (1,1) circle [radius=0.1];
\draw [fill] (1.8,1) circle [radius=0.1];
\draw [fill] (1,0) circle [radius=0.1];
\draw [fill] (2.75,1) circle [radius=0.1];
\draw [fill] (-0.75,1) circle [radius=0.1];
\draw (1,2) circle [radius=0.1];
\node [right] at (1,-0.1) {$4$};
\node [right] at (0.2,1) {\textrm{-}$1$};
\node [right] at (1,1) {\textrm{-}$1$};
\node [right] at (-0.75,1) {\textrm{-}$1$};
\node [right] at (2.75,1) {\textrm{-}$1$};
\node [right] at (1.8,1) {\textrm{-}$1$};
\node [right] at (1,2.05) {$1$};
\end{tikzpicture}}
}
\subfigure[${\cal S}_{n-2,n}$]
{{\scalefont{0.4}
\begin{tikzpicture}[scale=0.55]
\draw (1,0)--(-1.8,1)--(0.92,1.92);
\draw (1,0)--(-0.75,1)--(0.92,1.92);
\draw (1,0)--(0.2,1)--(0.92,1.92);
\draw (1,0)--(1.8,1)--(1.08,1.92);
\draw (1,0)--(2.75,1)--(1.08,1.92);
\draw (1,0)--(3.8,1)--(1.08,1.92);
\draw [fill] (-1.8,1) circle [radius=0.1];
\draw [fill] (3.8,1) circle [radius=0.1];
\draw [fill] (0.2,1) circle [radius=0.1];
\draw [fill] (1.8,1) circle [radius=0.1];
\draw [fill] (1,0) circle [radius=0.1];
\draw [fill] (2.75,1) circle [radius=0.1];
\draw [fill] (-0.75,1) circle [radius=0.1];
\draw (1,2) circle [radius=0.1];
\node [right] at (1,-0.1) {$n-3$};
\node [right] at (0.2,1) {\textrm{-}$1$};
\node at (1,1) {\ \ $\ldots$};
\node [right] at (-0.7,1) {\textrm{-}$1$};
\node [right] at (2.75,1) {\textrm{-}$1$};
\node [right] at (1.8,1) {\textrm{-}$1$};
\node [right] at (1,2.05) {$1$};
\node [right] at (-1.8,1) {\ \textrm{-}$1$};
\node [right] at (3.8,1) {\textrm{-}$1$};
\end{tikzpicture}}
}
\centerline{\bf Figure 1.}
\end{figure}

For each class ${\cal S}_{k,n}\ni S$ we have marked in Figure 1 the value of $\mu_S(x_i, x_n)$ next to each element $x_i$. The value of $\mu_S(x_i, x_n)$ can be easily seen by $(\ref{eq:mu_Srec})$. 

\begin{definition}\label{de:F_k}
For each $k=1, 2,\ldots$ let ${\cal F}_k$ denote the set of functions $f\in {\cal F}$ satisfying the condition sequence $({\rm C}_{m_1,1}),({\rm C}_{m_2,2}),\ldots,({\rm C}_{m_{n},n})$ for all meet closed subsets $S$ of $P$ such that $S$ can be constructed by $({\rm M}_{m_1,1}),({\rm M}_{m_2,2}),\ldots,$ $({\rm M}_{m_{n},n})$, where $m_1, m_2,\ldots, m_{n}\le k$.
\end{definition}

It is easy to see that ${\cal F}\supseteq{\cal F}_1\supseteq{\cal F}_2\supseteq{\cal F}_3\supseteq\cdots$ and more precisely
\begin{eqnarray}\label{eq:4.4}
&&{\cal F}_1={\cal G}_{1, 2}   =\{ f \mid \forall y,z\in P: y<z\Rightarrow f(y)\neq f(z)\}, \\
&&{\cal F}_2={\cal F}_1\cap {\cal G}_{2, 4}={\cal F}_1\cap \{ f \mid \forall\ {\rm antichains}\ y_1 ,y_2\in P,\forall z\in P:\nonumber \\
&&\quad\qquad   y_1\lor y_2\leq z\Rightarrow \textstyle\frac{1}{f}(z)\neq \frac{1}{f}(y_1)+\frac{1}{f}(y_2)-\frac{1}{f}(y_1\land y_2)\},\\
&&{\cal F}_3={\cal F}_2\cap {\cal G}_{3, 5}\cap{\cal G}_{3, 6}\cap{\cal G}_{3, 7}\cap {\cal G}_{3, 8}.
\end{eqnarray}

When adding the last element $x_n$ to the set $S_{n-1}$ the invertibility of $[S_{n}]_f$ depends only on the invertibility of $[S_{n-1}]_f$ and on the values $f(x_i)$ of $x_i$ such that $\mu_S(x_i,x_n)\neq0$. Thus when considering whether the condition ($C_{m_{n},n}$) is satisfied or not we can omit all elements $x_i$ with $\mu_S(x_i,x_n)=0$. It will turn out that when $n\leq 7$ we can omit most of the cases and restrict ourselves to the structures presented in Figure 1.

\begin{remark}
All the structures of $S$ mentioned here need not appear in a fixed lattice $(P,\leq)$, and thus the structure of $(P,\leq)$ also has a bearing on the possibility of the invertibility.
\end{remark}

\section{Chains, \texorpdfstring{$x_1$}{x1}-sets and a related class}\label{sect:5.chains}\setcounter{equation}{0}

In this section we consider invertibility of $[S]_f$ on certain sets $S$ which we use frequently in the lattice-theoretic generalization of the Bourque-Ligh conjecture in Section 6.

\begin{theorem}\label{th:3.3}
If $S$ is a chain, then $[S]_f$ is invertible if and only if $f(x_k)\neq f(x_{k-1})$ for $k=2,3,\ldots,n$. If $S$ is an $x_1$-set, then $[S]_f$ is invertible if and only if $f(x_k)\neq f(x_1)$ for $k=2,3,\ldots, n$.
\end{theorem}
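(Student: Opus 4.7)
The plan is to apply Theorem~\ref{th:3.2} in each case, so the work reduces to identifying, for each $k$, which elements of $S_{k-1}$ are covered by $x_k$ in $S_k$, and then writing out the conditions $(\mathrm{C}_{m_k,k})$ using the explicit form $(\mathrm{C}_{1,k})$: $f(x_k)\neq f(x_{i_1})$.

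For the chain case, $S_k=\{x_1,\ldots,x_k\}$ is totally ordered, so $x_k$ is strictly above every $x_1,\ldots,x_{k-1}$, and the unique maximal element strictly below $x_k$ in $S_k$ is $x_{k-1}$. Hence $m_k=1$ for every $k\geq 2$ with covered element $x_{i_1}=x_{k-1}$, and $(\mathrm{C}_{1,k})$ becomes $f(x_k)\neq f(x_{k-1})$. Theorem~\ref{th:3.2} then gives the claim immediately.

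For the $x_1$-set case, I first observe that for $i,j\geq 2$ with $i\neq j$ the elements $x_i,x_j$ are incomparable: if $x_i\leq x_j$ then $x_i\land x_j=x_i$, but by assumption this meet equals $x_1$, forcing $x_i=x_1$, contradicting $i\geq 2$. Thus for any $k\geq 2$, inside $S_k$ the only element strictly below $x_k$ is $x_1$. Consequently $m_k=1$ with covered element $x_{i_1}=x_1$, and $(\mathrm{C}_{1,k})$ becomes $f(x_k)\neq f(x_1)$. Applying Theorem~\ref{th:3.2} finishes this case as well.

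There is no real obstacle here; the only point that needs a brief check is the incomparability of the top elements in the $x_1$-set, which guarantees that when the set is built up one element at a time each new $x_k$ covers $x_1$ alone in $S_k$ (and not any of the already-placed $x_2,\ldots,x_{k-1}$). The indexing convention $x_i<x_j\Rightarrow i<j$ assumed throughout Section~\ref{sect:3} ensures that the inductive construction $S_1\subset S_2\subset\cdots\subset S_n$ is compatible with the order in both cases.
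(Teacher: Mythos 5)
Your proposal is correct and follows the same route as the paper: both cases are recognized as constructions using only the methods $(\mathrm{M}_{1,k})$, after which Theorem~\ref{th:3.2} with the conditions $(\mathrm{C}_{1,k})$ gives exactly the stated criteria. The paper states this in one line; your only addition is the (correct and worthwhile) explicit check that in an $x_1$-set the elements $x_2,\ldots,x_n$ are pairwise incomparable, so each new element covers $x_1$ alone.
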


\begin{proof}
Chains and $x_1$-sets are constructed using the methods $({\rm M}_{1,i})$ only. By Theorem \ref{th:3.2}, we obtain Theorem \ref{th:3.3} taking the appropriate conditions $({\rm C}_{1,i})$.
\end{proof}

\begin{remark}
It is easy to see that if the set $S$ is meet closed and can be constructed by using only the methods $({\rm M}_{1,i})$, then $f\in{\cal F}_1$ is a sufficient condition for the invertibility of $(S)_{\frac{1}{f}}$ and $(S)_f$ and, provided that $f$ is semimultiplicative with nonzero values, also for the invertibility of $[S]_f$ and $[S]_{\frac{1}{f}}$. In this case the Hasse diagram of the set $S$ considered as an undirected graph is a tree, and the positive definiteness of the matrix $(S)_f$ has an interesting connection to the properties of the function $f$, see \cite[Theorems 4.1 and 4.2]{MatHau2}. 
\end{remark}

\begin{corollary}\label{cor:3.1}
Let $(P,\leq)=({\mathbb Z}_+,\mid)$. If $S$ is a (divisor) chain or an $x_1$-set, then $[S]$ is invertible.
\end{corollary}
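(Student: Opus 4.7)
The plan is to deduce Corollary \ref{cor:3.1} directly from Theorem \ref{th:3.3} by specializing to the lattice $(\mathbb{Z}_+,\mid)=(\mathbb{Z}_+,\gcd,\mathrm{lcm})$ and the arithmetical function $f=N$, where $N(m)=m$.

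First I would verify that $N$ belongs to the ambient class $\mathcal{F}$ on this lattice. Clearly $N(m)\neq 0$ for every $m\in\mathbb{Z}_+$, so the non-vanishing hypothesis is satisfied. Semimultiplicativity amounts to the identity $N(x)N(y)=N(x\land y)N(x\lor y)$, i.e.\ $xy=\gcd(x,y)\cdot\mathrm{lcm}(x,y)$, which is the classical identity relating the gcd and lcm of two positive integers. Hence $N\in\mathcal{F}$ in $(\mathbb{Z}_+,\mid)$, so Theorem \ref{th:3.3} is applicable to the LCM matrix $[S]=[S]_N$.

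Next I would check the distinctness condition in each of the two cases. Since the elements $x_1,x_2,\ldots,x_n$ of $S$ are by standing assumption distinct positive integers, $N(x_k)=x_k\neq x_{k-1}=N(x_{k-1})$ for every $k=2,3,\ldots,n$; this handles the chain case through Theorem \ref{th:3.3}. Likewise, if $S$ is an $x_1$-set then $N(x_k)=x_k\neq x_1=N(x_1)$ for $k=2,3,\ldots,n$, which gives the $x_1$-set case.

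There is no real obstacle here: the corollary is essentially a verification that $N$ meets the hypotheses of Theorem \ref{th:3.3}, together with the observation that all the required inequalities reduce to distinctness of the underlying integers. If anything, the only point worth emphasizing is that an $x_1$-set in $(\mathbb{Z}_+,\mid)$ is automatically gcd-closed (since the pairwise gcds all equal $x_1\in S$), so Theorem \ref{th:3.3} applies with the standing meet-closed assumption satisfied, and similarly every divisor chain is trivially gcd-closed.
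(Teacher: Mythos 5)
Your proposal is correct and follows exactly the paper's route: specialize Theorem \ref{th:3.3} to $(\mathbb{Z}_+,\mid)$ with $f=N$ and observe that the required inequalities reduce to the distinctness of the integers $x_1,\ldots,x_n$. The paper's proof is just the one-line remark that $N$ fulfills the conditions of Theorem \ref{th:3.3}; your additional checks (semimultiplicativity of $N$ via $xy=\gcd(x,y)\,\mathrm{lcm}(x,y)$, nonvanishing, and meet-closedness of chains and $x_1$-sets) simply make explicit what the paper leaves implicit.
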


\begin{proof}
The arithmetical function $N$ fulfills the conditions in Theorem \ref{th:3.3}.
\end{proof}

Note that the conditions of Theorem \ref{th:3.3} also imply the invertibility of the associated meet matrix $(S)_f$, see \cite[Corollary~2]{Hau96}. The requirement of semimultiplicativity of $f$ in the first part of Theorem \ref{th:3.3} is irrelevant, since any $f$ is semimultiplicative on chains.

One important class of meet closed sets (termed as ${\cal S}_{n-2, n}$, see Figure 1) is constructed by adding an upper bound to an $x_1$-set.

\begin{theorem}\label{th:3.4}
Let $n\geq 3$. Let $S\in {\cal S}_{n-2, n}$, i.e., $S_{n-1}$ is an $x_1$-set and $x_1\lor\cdots\lor x_{n-1}\leq x_n$. Then $[S]_f$ is invertible if and only if $f(x_k)\neq f(x_1)$ for $k=2,3,\ldots, n-1$ and $$\frac{1}{f(x_n)}\neq\left(\sum_{k=2}^{n-1}\frac{1}{f(x_k)}\right)-\frac{n-3}{f(x_1)}.$$
\end{theorem}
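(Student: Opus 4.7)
The plan is to apply Theorem~\ref{th:3.2} to a natural method sequence building $S$ from the bottom up. Since $S_{n-1}$ is an $x_1$-set, the elements $x_2,\ldots,x_{n-1}$ are pairwise incomparable in $(S,\le)$ and each covers $x_1$ there, so $S_{n-1}$ is constructed by the methods $(\mathrm{M}_{1,2}),(\mathrm{M}_{1,3}),\ldots,(\mathrm{M}_{1,n-1})$. Because $x_1\lor\cdots\lor x_{n-1}\le x_n$, the element $x_n$ sits above every previous element. In the Hasse diagram of $S$ it therefore covers precisely the maximal antichain $\{x_2,\ldots,x_{n-1}\}$ of $S_{n-1}$: each $x_k$ with $k\ge 2$ lies strictly between $x_1$ and $x_n$ in $S$, so $x_1$ is not covered by $x_n$. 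Hence the last construction step is $(\mathrm{M}_{n-2,n})$, and Theorem~\ref{th:3.2} reduces the invertibility of $[S]_f$ to the conjunction of $(\mathrm{C}_{1,2}),\ldots,(\mathrm{C}_{1,n-1})$ and $(\mathrm{C}_{n-2,n})$.

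The conditions $(\mathrm{C}_{1,k})$ translate immediately to $f(x_k)\ne f(x_1)$ for $k=2,\ldots,n-1$, which accounts for the first half of the theorem. For the final condition I would invoke formula~\eqref{eq:Cm2}, namely
$$\frac{1}{f(x_n)}\ne -\sum_{k=1}^{n-1}\frac{1}{f(x_k)}\mu_S(x_k,x_n),$$
and then compute the relevant M\"obius values using~\eqref{eq:mu_Srec}. For $2\le k\le n-1$ the interval $[x_k,x_n]_S$ is the two-element chain $\{x_k,x_n\}$, so $\mu_S(x_k,x_n)=-1$; applying the identity $\sum_{z\in S}\mu_S(z,x_n)=0$ at $x_1$ then forces $\mu_S(x_1,x_n)=n-3$. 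These are exactly the values already labelled in Figure~1(k). Substituting them back and rearranging yields precisely
$$\frac{1}{f(x_n)}\ne \sum_{k=2}^{n-1}\frac{1}{f(x_k)}-\frac{n-3}{f(x_1)}.$$

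The only mildly subtle point is confirming the structural claim about the covering relations in $S$---that $x_n$ covers each $x_k$ with $2\le k\le n-1$ and does not cover $x_1$---but this is forced by the strict chain $x_1<x_k<x_n$ inside $S$ and by the fact that the $x_k$ are pairwise incomparable. Once this is settled, the proof is a straightforward bookkeeping exercise combining Theorem~\ref{th:3.2} with the routine M\"obius computation; the case $n=3$ collapses to the chain case of Theorem~\ref{th:3.3}, providing a sanity check.
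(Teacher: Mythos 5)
Your proposal is correct and follows essentially the same route as the paper: recognize that $S$ is built from the $x_1$-set $S_{n-1}$ by $(\mathrm{M}_{n-2,n})$, invoke Theorem~\ref{th:3.2} (the paper phrases the first half via Theorem~\ref{th:3.3}, which is the same content), and evaluate $(\mathrm{C}_{n-2,n})$ through \eqref{eq:Cm2} with $\mu_S(x_k,x_n)=-1$ for $2\le k\le n-1$ and $\mu_S(x_1,x_n)=n-3$. The only difference is that you verify the covering structure and the M\"obius values explicitly where the paper reads them off Figure~1, which is a harmless (indeed welcome) addition.
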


\begin{proof}
Since $S$ can be constructed from an $x_1$-set $S_{n-1}$ by (M$_{n-2,n}$), then the conditions are those mentioned in Theorem \ref{th:3.3} for $S_{n-1}$ together with condition (C$_{n-2,n}$). Using $(\ref{eq:Cm2})$ and the values $\mu_S(x_k, x_n)$ of ${\cal S}_{n-2, n}$ in Figure 1 we obtain 
\begin{equation}
\frac{1}{f(x_n)}\neq\frac{1}{f(x_{n-1})}+\cdots+ \frac{1}{f(x_2)}-\frac{n-3}{f(x_1)}.\nonumber
\end{equation}
\end{proof}

\begin{corollary}\label{cor:3.2}
Let $(P,\leq)=({\mathbb Z}_+,\mid)$ and let $n\geq3$. If $S_{n-1}$ is an $x_1$-set and $\hbox{\rm lcm}(S_{n-1})\mid x_n$, i.e.,~$S\in {\cal S}_{n-2, n}$, then $[S]$ is invertible.
\end{corollary}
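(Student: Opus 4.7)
The plan is to invoke Theorem \ref{th:3.4} with the classical arithmetical function $f=N$ (so $N(m)=m$) and verify its two hypotheses on the set $S$. The first batch, $N(x_k)\neq N(x_1)$ for $k=2,\dots,n-1$, is automatic because the elements of $S$ are distinct positive integers, so the real work is in checking the single numerical inequality
\[
\frac{1}{x_n}\neq\sum_{k=2}^{n-1}\frac{1}{x_k}-\frac{n-3}{x_1}.
\]

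For the second step, I would normalize by writing $x_k=x_1 d_k$ for $k=1,\dots,n$. The $x_1$-set hypothesis forces $d_1=1$ and makes $d_2,\dots,d_{n-1}$ pairwise coprime positive integers each $\geq 2$ (since $x_k\neq x_1$), and the assumption $\mathrm{lcm}(S_{n-1})\mid x_n$ translates to $d_2 d_3\cdots d_{n-1}\mid d_n$, so in particular $d_n\geq 2$. After multiplying the displayed inequality by $x_1$, the condition to verify reduces to
\[
\frac{1}{d_n}\neq\sum_{k=2}^{n-1}\frac{1}{d_k}-(n-3).
\]

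The clean comparison uses $1/d_k\le 1/2$ for each $k\in\{2,\dots,n-1\}$, which bounds the right-hand side by $\tfrac{n-2}{2}-(n-3)=\tfrac{4-n}{2}$. For $n\geq 4$ this is $\leq 0$, whereas the left-hand side $1/d_n$ is strictly positive, so the two sides cannot be equal. The borderline case $n=3$ has to be treated separately, since then the coefficient $(n-3)/x_1$ vanishes and the $x_1$-set $S_2=\{x_1,x_2\}$ with $x_1\mid x_2$ is simply a chain; here $S=\{x_1,x_2,x_3\}$ with $x_1\mid x_2\mid x_3$ is a divisor chain, and Corollary \ref{cor:3.1} gives invertibility directly.

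I do not expect a real obstacle: the only delicate point is that the formula in Theorem \ref{th:3.4} formally requires $n\geq 3$ but genuinely exhibits the mixed sum only for $n\geq 4$, so one must remember to dispose of $n=3$ via the chain corollary rather than via the magnitude estimate. Everything else is a one-line sign comparison after the normalization $x_k=x_1 d_k$.
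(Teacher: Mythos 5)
Your proposal is correct and follows essentially the same route as the paper: both reduce to the single inequality of Theorem \ref{th:3.4}, normalize by writing $x_k=x_1 a_k$ with each $a_k\ge 2$, bound $\sum_{k=2}^{n-1}1/a_k$ by $\tfrac{n-2}{2}\le n-3$ for $n\ge 4$, and dispose of $n=3$ separately via the chain case of Corollary \ref{cor:3.1}. The only cosmetic difference is that the paper phrases the conclusion as strict positivity of $\tfrac{1}{x_n}+\tfrac{n-3}{x_1}-\sum_k\tfrac{1}{x_k}$ (using distinctness of the $a_k$ for a strict bound), while you compare signs of the two sides directly; both are valid.
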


\begin{proof}
It suffices to prove that $N\in {\cal G}_{n-2, n}$. The case $n=3$ follows from Corollary \ref{cor:3.1}, so we may assume that $n\geq4$. Now $x_1=\hbox{gcd}(x_i,x_j)$ for all $2\leq i<j\leq n-1$. Thus for $i=2,3,\ldots,n-1$ we have $x_i=a_ix_1$, where $a_i$'s are distinct and $a_i\geq2$ for each $i$. Thus we have
\begin{equation*}\frac{1}{x_n}+\frac{n-3}{x_1}-\sum_{k=2}^{n-1}\frac{1}{x_k} =\frac{1}{x_n}+\frac{1}{x_1}\left((n-3)-\sum_{k=2}^{n-1}\frac{1}{a_k}\right)>0,
\end{equation*}
since
\[
\sum_{k=2}^{n-1}\frac{1}{a_k}<\sum_{k=2}^{n-1}\frac{1}{2}=\frac{n-2}{2}\leq n-3.
\]
Thus $N\in {\cal G}_{n-2, n}$.
\end{proof}

\begin{remark}\label{rem:B-L} Let $(P,\le)=({\mathbb Z}_+, \mid)$. Since $N\in {\cal F}_2$, we see that if $S$ is any gcd-closed set constructed by $({\rm M}_{1,i})$ and $({\rm M}_{2,i})$ repeatedly, then the LCM matrix $[S]$ is invertible, see Corollaries \ref{cor:3.1} and \ref{cor:3.2} In particular, by Corollary \ref{cor:3.2} we also have $N\in {\cal G}_{2,4}$, $N\in {\cal G}_{3,5}$, $N\in {\cal G}_{4,6}$ and $N\in {\cal G}_{5,7}$.
\end{remark}

\section{The Bourque-Ligh conjecture}\label{sect:6}

Bourque and Ligh \cite{Bour92} conjectured that the LCM matrix $[S]$ is invertible on any gcd-closed set $S$. It is known that this conjecture holds for $n\le 7$ and does not generally hold for $n\ge 8$. A number-theoretic proof of this solution has been given in \cite{Hong99}. We here provide a lattice-theoretic proof. We go through all meet closed sets $S$ (up to isomorphism) with $n=1, 2,\ldots, 7$ elements, and applying the conditions (C$_{m_i,i}$) we study the invertibility of the join matrix $[S]_f$ on $S$ in any lattice. When we take $(P,\le)=({\mathbb Z}_+, \mid)$ and $f=N$ we obtain the solution of the Bourque-Ligh conjecture given in \cite{Hong99}. In principle this is a simple method, since at least for small $n$ the sets $S$ are easy to classify on the basis of their incomparable elements and the conditions (C$_{m_i,i}$) are easy to evaluate applying $(\ref{eq:Cm2})$, the Hasse diagram of $S$ and the recursive properties of $\mu_S$. It would be easy to derive necessary and sufficient conditions for the invertibility of the join matrix $[S]_f$ on $S$ in any lattice, but for the sake of brevity in we present only sufficient conditions.

\subsection{Cases \texorpdfstring{$n=1, 2, 3, 4, 5$}{n=1,2,3,4,5}}\label{subsect:4.3}

We begin by constructing recursively all possible meet closed sets with at most $5$ elements, see Figure 2. If all meet semilattices with $n$ elements are known, then a simple but laborous way to obtain all possible meet semilattices with $n+1$ elements is first to determine all possible ways to add a maximal element to them and then to eliminate repetitions. The semilattices are then classified based on the largest $m_i$ in the methods (M$_{m_i,i}$) used to construct each semilattice. Most of them are constructed by using (M$_{1,i}$) only, but for some of them also (M$_{2,i}$) or even (M$_{3,i}$) is needed.

\setcounter{figure}{0}
\setcounter{subfigure}{0}
\begin{figure}[htb!]
\centering
\subfigure[$1_{\rm A}$]
{{\scalefont{0.4}
\begin{tikzpicture}[scale=0.55]
\draw (0,2) circle [radius=0.1];
\node [right] at (0,2) {$1$};
\node [right] at (1,2) {\ };
\node [right] at (-1,2) {\ };
\end{tikzpicture}}
}
\subfigure[$2_{\rm A}$]
{{\scalefont{0.4}
\begin{tikzpicture}[scale=0.55]
\draw (0,1)--(0,1.9);
\draw [fill] (0,1) circle [radius=0.1];
\draw (0,2) circle [radius=0.1];
\node [right] at (0,1) {\textrm{-}$1$};
\node [right] at (0,2) {$1$};
\node [right] at (1,2) {\ };
\node [right] at (-1,2) {\ };
\end{tikzpicture}}
}
\subfigure[$3_{\rm A}$]
{{\scalefont{0.4}
\begin{tikzpicture}[scale=0.55]
\draw (0,0)--(0,1.9);
\draw [fill] (0,1) circle [radius=0.1];
\draw [fill] (0,0) circle [radius=0.1];
\draw (0,2) circle [radius=0.1];
\node [right] at (0,1) {\textrm{-}$1$};
\node [right] at (0,2) {$1$};
\node [right] at (0,0) {$0$};
\node [right] at (1,2) {\ };
\node [right] at (-1,2) {\ };
\end{tikzpicture}}
}
\subfigure[$3_{\rm B}$]
{{\scalefont{0.4}
\begin{tikzpicture}[scale=0.55]
\draw (0,2)--(-0.5,3);
\draw (0,2)--(0.45,2.9);
\draw (0.5,3) circle [radius=0.1];
\draw [fill] (0,2) circle [radius=0.1];
\draw [fill] (-0.5,3) circle [radius=0.1];
\node [right] at (0,2) {\textrm{-}$1$};
\node [right] at (-0.5,3) {$0$};
\node [right] at (0.5,3) {$1$};
\end{tikzpicture}}
}
\subfigure[$4_{\rm A}$]
{{\scalefont{0.4}
\begin{tikzpicture}[scale=0.55]
\draw (0,1)--(0,3.9);
\draw [fill] (0,1) circle [radius=0.1];
\draw [fill] (0,2) circle [radius=0.1];
\draw [fill] (0,3) circle [radius=0.1];
\draw (0,4) circle [radius=0.1];
\node [right] at (0,1) {$0$};
\node [right] at (0,2) {$0$};
\node [right] at (0,3) {\textrm{-}$1$};
\node [right] at (0,4) {$1$};
\node [right] at (1,2) {\ };
\node [right] at (-1,2) {\ };
\end{tikzpicture}}
}
\subfigure[$4_{\rm B}$]
{{\scalefont{0.4}
\begin{tikzpicture}[scale=0.55]
\draw (0,1)--(0,2)--(-0.5,3);
\draw (0,2)--(0.45,2.9);
\draw [fill] (0,1) circle [radius=0.1];
\draw [fill] (0,2) circle [radius=0.1];
\draw [fill] (-0.5,3) circle [radius=0.1];
\draw (0.5,3) circle [radius=0.1];
\node [right] at (0,1) {$0$};
\node [right] at (0,2) {\textrm{-}$1$};
\node [right] at (-0.5,3) {$0$};
\node [right] at (0.5,3) {$1$};
\end{tikzpicture}}
}
\subfigure[$4_{\rm C}$]
{{\scalefont{0.4}
\begin{tikzpicture}[scale=0.55]
\draw (0,2)--(-0.5,3)--(-0.5,3.9);
\draw (0,2)--(0.45,2.9);
\draw (0.5,3) circle [radius=0.1];
\draw [fill] (0,2) circle [radius=0.1];
\draw [fill] (-0.5,3) circle [radius=0.1];
\draw [fill] (-0.5,4) circle [radius=0.1];
\node [right] at (-0.5,4) {$0$};
\node [right] at (0,2) {\textrm{-}$1$};
\node [right] at (-0.5,3) {$0$};
\node [right] at (0.5,3) {$1$};
\end{tikzpicture}}
}
\subfigure[$4_{\rm D}$]
{{\scalefont{0.4}
\begin{tikzpicture}[scale=0.55]
\draw (1,0)--(0,1);
\draw (1,0)--(1,0.9);
\draw (1,0)--(2,1);
\draw [fill] (0,1) circle [radius=0.1];
\draw [fill] (2,1) circle [radius=0.1];
\draw [fill] (1,0) circle [radius=0.1];
\draw (1,1) circle [radius=0.1];
\node [right] at (1,0) {\textrm{-}$1$};
\node [right] at (0,1) {$0$};
\node [right] at (1,1) {$1$};
\node [right] at (2,1) {$0$};
\end{tikzpicture}}
}
\subfigure[$4_{\rm E}$]
{{\scalefont{0.4}
\begin{tikzpicture}[scale=0.55]
\draw (1,0)--(0.25,1)--(0.92,1.92);
\draw (1,0)--(1.75,1)--(1.08,1.92);
\draw [fill] (0.25,1) circle [radius=0.1];
\draw [fill] (1.75,1) circle [radius=0.1];
\draw [fill] (1,0) circle [radius=0.1];
\draw (1,2) circle [radius=0.1];
\node [right] at (1,0) {$1$};
\node [right] at (0.25,1) {\textrm{-}$1$};
\node [right] at (1.75,1) {\textrm{-}$1$};
\node [right] at (1,2) {$1$};
\end{tikzpicture}}
}
\subfigure[$5_{\rm A}$]
{{\scalefont{0.4}
\begin{tikzpicture}[scale=0.55]
\draw (0,1)--(0,4.9);
\draw [fill] (0,1) circle [radius=0.1];
\draw [fill] (0,2) circle [radius=0.1];
\draw [fill] (0,3) circle [radius=0.1];
\draw [fill] (0,4) circle [radius=0.1];
\draw (0,5) circle [radius=0.1];
\node [right] at (0,1) {$0$};
\node [right] at (0,2) {$0$};
\node [right] at (0,4) {\textrm{-}$1$};
\node [right] at (0,3) {$0$};
\node [right] at (0,5) {$1$};
\node [right] at (1,2) {\ };
\node [right] at (-1,2) {\ };
\end{tikzpicture}}
}
\subfigure[$5_{\rm B}$]
{{\scalefont{0.4}
\begin{tikzpicture}[scale=0.55]
\draw (0,1)--(0,3)--(-0.5,4);
\draw (0,3)--(0.45,3.9);
\draw [fill] (0,1) circle [radius=0.1];
\draw [fill] (0,2) circle [radius=0.1];
\draw [fill] (0,3) circle [radius=0.1];
\draw [fill] (-0.5,4) circle [radius=0.1];
\draw (0.5,4) circle [radius=0.1];
\node [right] at (0,1) {$0$};
\node [right] at (0,2) {$0$};
\node [right] at (-0.5,4) {$0$};
\node [right] at (0,3) {\textrm{-}$1$};
\node [right] at (0.5,4) {$1$};
\end{tikzpicture}}
}
\subfigure[$5_{\rm C}$]
{{\scalefont{0.4}
\begin{tikzpicture}[scale=0.55]
\draw (0,1)--(0,2)--(-0.5,3)--(-0.5,4);
\draw (0,2)--(0.45,2.9);
\draw [fill] (0,1) circle [radius=0.1];
\draw [fill] (0,2) circle [radius=0.1];
\draw [fill] (-0.5,4) circle [radius=0.1];
\draw [fill] (-0.5,3) circle [radius=0.1];
\draw (0.5,3) circle [radius=0.1];
\node [right] at (0,1) {$0$};
\node [right] at (0,2) {\textrm{-}$1$};
\node [right] at (-0.5,3) {$0$};
\node [right] at (0.5,3) {$1$};
\node [right] at (-0.5,4) {$0$};
\end{tikzpicture}}
}
\subfigure[$5_{\rm D}$]
{{\scalefont{0.4}
\begin{tikzpicture}[scale=0.55]
\draw (0,2)--(-0.5,3)--(-0.5,4)--(-0.5,4.9);
\draw (0,2)--(0.45,2.9);
\draw (0.5,3) circle [radius=0.1];
\draw [fill] (0,2) circle [radius=0.1];
\draw [fill] (-0.5,3) circle [radius=0.1];
\draw [fill] (-0.5,4) circle [radius=0.1];
\draw [fill] (-0.5,5) circle [radius=0.1];
\node [right] at (-0.5,4) {$0$};
\node [right] at (0,2) {\textrm{-}$1$};
\node [right] at (-0.5,3) {$0$};
\node [right] at (-0.5,5) {$0$};
\node [right] at (0.5,3) {$1$};
\node [right] at (1,2) {\ };
\node [right] at (-1,2) {\ };
\end{tikzpicture}}
}
\subfigure[$5_{\rm E}$]
{{\scalefont{0.4}
\begin{tikzpicture}[scale=0.55]
\draw (1,-1)--(1,0)--(0,1);
\draw (1,0)--(1,0.9);
\draw (1,0)--(2,1);
\draw [fill] (0,1) circle [radius=0.1];
\draw [fill] (1,-1) circle [radius=0.1];
\draw [fill] (2,1) circle [radius=0.1];
\draw [fill] (1,0) circle [radius=0.1];
\draw (1,1) circle [radius=0.1];
\node [right] at (1,0) {\textrm{-}$1$};
\node [right] at (0,1) {$0$};
\node [right] at (1,1) {$1$};
\node [right] at (2,1) {$0$};
\node [right] at (1,-1) {$0$};
\end{tikzpicture}}
}
\subfigure[$5_{\rm F}$]
{{\scalefont{0.4}
\begin{tikzpicture}[scale=0.55]
\draw (0.5,1)--(0,2)--(-0.5,3);
\draw (0,2)--(0.45,2.9);
\draw (0.5,1)--(0.95,1.9);
\draw (1,2) circle [radius=0.1];
\draw [fill] (0.5,1) circle [radius=0.1];
\draw [fill] (0,2) circle [radius=0.1];
\draw [fill] (-0.5,3) circle [radius=0.1];
\draw [fill] (0.5,3) circle [radius=0.1];
\node [right] at (0.5,1) {\textrm{-}$1$};
\node [right] at (0,2) {$0$};
\node [right] at (-0.5,3) {$0$};
\node [right] at (0.5,3) {$0$};
\node [right] at (1,2) {$1$};
\end{tikzpicture}}
}
\subfigure[$5_{\rm G}$]
{{\scalefont{0.4}
\begin{tikzpicture}[scale=0.55]
\draw (0,2)--(-0.5,3)--(-0.5,3.9);
\draw (0,2)--(0.5,3)--(0.5,3.9);
\draw (0.5,4) circle [radius=0.1];
\draw [fill] (0,2) circle [radius=0.1];
\draw [fill] (-0.5,3) circle [radius=0.1];
\draw [fill] (0.5,3) circle [radius=0.1];
\draw [fill] (-0.5,4) circle [radius=0.1];
\node [right] at (-0.5,4) {$0$};
\node [right] at (0,2) {$0$};
\node [right] at (-0.5,3) {$0$};
\node [right] at (0.5,3) {\textrm{-}$1$};
\node [right] at (0.5,4) {$1$};
\end{tikzpicture}}
}
\subfigure[$5_{\rm H}$]
{{\scalefont{0.4}
\begin{tikzpicture}[scale=0.55]
\draw (1,0)--(0,1);
\draw (1,0)--(1,1.9);
\draw (1,0)--(2,1);
\draw [fill] (0,1) circle [radius=0.1];
\draw [fill] (2,1) circle [radius=0.1];
\draw [fill] (1,0) circle [radius=0.1];
\draw [fill] (1,1) circle [radius=0.1];
\draw (1,2) circle [radius=0.1];
\node [right] at (1,0) {$0$};
\node [right] at (0,1) {$0$};
\node [right] at (1,1) {\textrm{-}$1$};
\node [right] at (1,2) {$1$};
\node [right] at (2,1) {$0$};
\end{tikzpicture}}
}
\subfigure[$5_{\rm I}$]
{{\scalefont{0.4}
\begin{tikzpicture}[scale=0.55]
\draw (1,0)--(-0.25,1);
\draw (1,0)--(0.5,1);
\draw (1,0)--(1.5,1);
\draw (1,0)--(2.17,0.92);
\draw [fill] (-0.25,1) circle [radius=0.1];
\draw [fill] (0.5,1) circle [radius=0.1];
\draw [fill] (1.5,1) circle [radius=0.1];
\draw [fill] (1,0) circle [radius=0.1];
\draw (2.25,1) circle [radius=0.1];
\node [right] at (1,0) {\textrm{-}$1$};
\node [right] at (-0.25,1) {$0$};
\node [right] at (1.5,1) {$0$};
\node [right] at (0.5,1) {$0$};
\node [right] at (2.25,1) {$1$};
\end{tikzpicture}}
}
\subfigure[$5_{\rm J}$]
{{\scalefont{0.4}
\begin{tikzpicture}[scale=0.55]
\draw (1,0)--(0.25,1)--(1,2)--(1,2.9);
\draw (1,0)--(1.75,1)--(1.08,1.92);
\draw [fill] (0.25,1) circle [radius=0.1];
\draw [fill] (1.75,1) circle [radius=0.1];
\draw [fill] (1,0) circle [radius=0.1];
\draw [fill] (1,2) circle [radius=0.1];
\draw (1,3) circle [radius=0.1];
\node [right] at (1,0) {$0$};
\node [right] at (0.25,1) {$0$};
\node [right] at (1.75,1) {$0$};
\node [right] at (1,2) {\textrm{-}$1$};
\node [right] at (1,3) {$1$};
\end{tikzpicture}}
}
\subfigure[$5_{\rm K}$]
{{\scalefont{0.4}
\begin{tikzpicture}[scale=0.55]
\draw (1,0)--(0.25,1)--(1,2);
\draw (1,0)--(1.75,1)--(1,2);
\draw (1.75,1)--(1.75,1.9);
\draw [fill] (0.25,1) circle [radius=0.1];
\draw [fill] (1.75,1) circle [radius=0.1];
\draw [fill] (1,0) circle [radius=0.1];
\draw (1.75,2) circle [radius=0.1];
\draw [fill] (1,2) circle [radius=0.1];
\node [right] at (1,0) {$0$};
\node [right] at (0.25,1) {$0$};
\node [right] at (1.75,1) {\textrm{-}$1$};
\node [right] at (1,2) {$0$};
\node [right] at (1.75,2) {$1$};
\end{tikzpicture}}
}
\subfigure[$5_{\rm L}$]
{{\scalefont{0.4}
\begin{tikzpicture}[scale=0.55]
\draw (1,0)--(0.25,1)--(0.92,1.92);
\draw (1,0)--(1,0.9);
\draw (1,0)--(1.75,1)--(1.08,1.92);
\draw [fill] (0.25,1) circle [radius=0.1];
\draw [fill] (1.75,1) circle [radius=0.1];
\draw [fill] (1,0) circle [radius=0.1];
\draw [fill] (1,2) circle [radius=0.1];
\draw (1,1) circle [radius=0.1];
\node [right] at (1,0) {\textrm{-}$1$};
\node [right] at (0.25,1) {$0$};
\node [right] at (1.75,1) {$0$};
\node [right] at (1,2) {$0$};
\node [right] at (1,1) {$1$};
\end{tikzpicture}}
}
\subfigure[$5_{\rm M}$]
{{\scalefont{0.4}
\begin{tikzpicture}[scale=0.55]
\draw (1,0)--(0.25,1)--(0.92,1.92);
\draw (1,0)--(1,-0.9);
\draw (1,0)--(1.75,1)--(1.08,1.92);
\draw [fill] (0.25,1) circle [radius=0.1];
\draw [fill] (1.75,1) circle [radius=0.1];
\draw [fill] (1,0) circle [radius=0.1];
\draw (1,2) circle [radius=0.1];
\draw [fill] (1,-1) circle [radius=0.1];
\node [right] at (1,0) {$1$};
\node [right] at (0.25,1) {\textrm{-}$1$};
\node [right] at (1.75,1) {\textrm{-}$1$};
\node [right] at (1,2) {$1$};
\node [right] at (1,-1) {$0$};
\end{tikzpicture}}
}
\subfigure[$5_{\rm N}$]
{{\scalefont{0.4}
\begin{tikzpicture}[scale=0.55]
\draw (1,0)--(0,0.75)--(0,1.5)--(0.9,2.2);
\draw (1,0)--(2,1.125)--(1.1,2.2);
\draw [fill] (0,0.75) circle [radius=0.1];
\draw [fill] (0,1.5) circle [radius=0.1];
\draw [fill] (2,1.125) circle [radius=0.1];
\draw [fill] (1,0) circle [radius=0.1];
\draw (1,2.25) circle [radius=0.1];
\node [right] at (1,0) {$1$};
\node [right] at (0,0.8) {$0$};
\node [right] at (0,1.5) {\textrm{-}$1$};
\node [right] at (1,2.25) {$1$};
\node [right] at (2,1.125) {\textrm{-}$1$};
\end{tikzpicture}}
}
\subfigure[$5_{\rm O}$]
{{\scalefont{0.4}
\begin{tikzpicture}[scale=0.55]
\draw (1,0)--(0,1)--(0.92,1.92);
\draw (1,0)--(1,1.9);
\draw (1,0)--(2,1)--(1.08,1.92);
\draw [fill] (0,1) circle [radius=0.1];
\draw [fill] (1,1) circle [radius=0.1];
\draw [fill] (2,1) circle [radius=0.1];
\draw [fill] (1,0) circle [radius=0.1];
\draw (1,2) circle [radius=0.1];
\node [right] at (1,0) {$2$};
\node [right] at (0,1) {\textrm{-}$1$};
\node [right] at (1,1) {\textrm{-}$1$};
\node [right] at (2,1) {\textrm{-}$1$};
\node [right] at (1,2) {$1$};
\end{tikzpicture}}
}
\centerline{\bf Figure 2.}
\end{figure} 

In each class the white point stands for the last added element. For each class we have also marked the value of $\mu_S(x_i,x_n)$ next to each element $x_i$. The calculation of $\mu_S(x_i,x_n)$ bases on $(\ref{eq:mu_Srec})$.

\begin{theorem}\label{th:4.1}
Let $S$ be a set with at most $5$ elements.
\begin{itemize}
\item[\rm(i)] If $S\in 1_{\rm A}$, then $[S]_f$ is always invertible (under the condition $f(x)\neq0$ for all $x\in P$).
\item[\rm(ii)] If $S\in 2_{\rm A}, 3_{\rm A}, 3_{\rm B}, 4_{\rm A}, 4_{\rm B}, 4_{\rm C}, 4_{\rm D}, 5_{\rm A}, 5_{\rm B}, \ldots, 5_{\rm I}$ and $f\in {\cal F}_1$, then $[S]_f$ is invertible.
\item[\rm(iii)] If $S\in 4_{\rm E}, 5_{\rm J},5_{\rm K},5_{\rm L},5_{\rm M},5_{\rm N}$ and $f\in {\cal F}_2$, then $[S]_f$ is invertible.
\item[\rm(iv)] If $S\in 5_{\rm O}={\cal S}_{3,5}$ and $f\in{\cal F}_1\cap {\cal G}_{3,5}$, then $[S]_f$ is invertible.
\end{itemize}
\end{theorem}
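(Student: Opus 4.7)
The plan is to apply Theorem \ref{th:3.2} uniformly: for each of the structures listed in Figure 2, identify its construction sequence $(\mathrm{M}_{m_1,1}),\ldots,(\mathrm{M}_{m_n,n})$ by reading the Hasse diagram, and then verify that every condition in the sequence $(\mathrm{C}_{m_1,1}),\ldots,(\mathrm{C}_{m_n,n})$ is guaranteed by the stated membership of $f$. By Theorem \ref{th:3.2} this yields invertibility of $[S]_f$.

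For part (i), the one-element case reduces to $[S]_f=[f(x_1)]$, which is invertible by the standing assumption $f(x_1)\neq 0$. For part (ii), inspection of each diagram shows that the listed sets can be built using only the methods $(\mathrm{M}_{1,i})$ (each newly added element covers exactly one previous element). The required conditions $(\mathrm{C}_{1,i})$, read as $f(x_i)\neq f(x_{i_1})$ with $x_{i_1}<x_i$, are precisely the defining condition of ${\cal F}_1={\cal G}_{1,2}$ given in \eqref{eq:4.4}, so $f\in{\cal F}_1$ suffices.

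For part (iii), each of $4_{\rm E},5_{\rm J},5_{\rm K},5_{\rm L},5_{\rm M},5_{\rm N}$ admits a construction sequence composed of $(\mathrm{M}_{1,i})$ steps together with exactly one $(\mathrm{M}_{2,i})$ step. The $(\mathrm{C}_{1,i})$ conditions are handled as in (ii) via ${\cal F}_1\supseteq{\cal F}_2$, while the single condition $(\mathrm{C}_{2,i})$, namely $\tfrac{1}{f(x_i)}\neq\tfrac{1}{f}(x_{i_1})+\tfrac{1}{f}(x_{i_2})-\tfrac{1}{f}(x_{i_1}\land x_{i_2})$, is exactly the statement that the four-element diamond $\{x_{i_1}\land x_{i_2},x_{i_1},x_{i_2},x_i\}\in{\cal S}_{2,4}$ satisfies the defining inequality of ${\cal G}_{2,4}$. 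Hence $f\in{\cal F}_2={\cal F}_1\cap{\cal G}_{2,4}$ suffices. For part (iv), $5_{\rm O}={\cal S}_{3,5}$ is built by four $(\mathrm{M}_{1,i})$ steps followed by a single $(\mathrm{M}_{3,5})$ step; the first four conditions are covered by $f\in{\cal F}_1$, and the last, $(\mathrm{C}_{3,5})$, is precisely the nonvanishing $(\tfrac{1}{f}\ast_S\mu_S)(x_5)\ne 0$ that defines $f\in{\cal G}_{3,5}$.

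The main obstacle is the bookkeeping: one must confirm, case by case, that each of the $22$ diagrams in Figure 2 really does admit a construction sequence of the claimed type (i.e.,~that no $(\mathrm{M}_{k,i})$ with $k\geq 2$ is forced except where asserted, and that exactly one $(\mathrm{M}_{2,i})$ or $(\mathrm{M}_{3,5})$ appears in the cases of (iii) and (iv) respectively). Once this case analysis is carried out, the conclusion follows uniformly from Theorem \ref{th:3.2} together with the class descriptions \eqref{eq:4.4}--(4.6); no additional computation beyond matching conditions to class definitions is needed.
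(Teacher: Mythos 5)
Your proposal is correct and follows essentially the same route as the paper: classify each diagram of Figure~2 by the methods $(\mathrm{M}_{m_i,i})$ needed to build it, then invoke Theorem~\ref{th:3.2} and the definitions of ${\cal F}_1$, ${\cal F}_2$ and ${\cal G}_{3,5}$ to match the required conditions. The only cosmetic difference is in part (iv), where the paper cites Theorem~\ref{th:3.4} (the case ${\cal S}_{n-2,n}$ with $n=5$) while you verify $(\mathrm{C}_{3,5})$ directly against the definition of ${\cal G}_{3,5}$; these are equivalent.
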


\begin{proof}
(i) The one element case is trivial. (ii) If $S$ belongs to one of the classes mentioned in part (ii), then $S$ can be constructed by (M$_{1,i}$) only and thus $f\in {\cal F}_1$ is a sufficient condition for the invertibility of $[S]_f$, see Definition \ref{de:F_k}. (iii) If $S$ belongs to the classes mentioned in (iii), then both (M$_{1,i}$) and (M$_{2,i}$) are needed and therefore $f\in {\cal F}_2$ is sufficient for the invertibility. (iv) If $S\in 5_{\rm O}$, then the conditions for the invertibility of $[S]_f$ follow from Theorem \ref{th:3.4}.
\end{proof}

\begin{corollary}\label{cor:4.1}
If $S$ is a meet closed set with at most $5$ elements and $f\in{\cal F}_2\cap{\cal G}_{3,5}$, then $[S]_f$ is invertible. In particular, if $S$ is a gcd-closed set with at most $5$ elements, then $[S]$ is invertible.
\end{corollary}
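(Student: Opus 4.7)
The plan is to derive the corollary as a direct consequence of Theorem \ref{th:4.1} together with the hierarchy $\mathcal{F}_1\supseteq\mathcal{F}_2$ and the membership claims for $N$ recorded in Remark \ref{rem:B-L}. First I would observe that Figure 2 enumerates, up to isomorphism, every meet closed set $S$ with $|S|\le 5$, partitioning them into the classes $1_{\rm A}$, $2_{\rm A}$, $3_{\rm A}$, $3_{\rm B}$, $4_{\rm A},\ldots,4_{\rm E}$, $5_{\rm A},\ldots,5_{\rm O}$. Thus any meet closed set of size at most $5$ lies in exactly one of the four groups treated in Theorem \ref{th:4.1}(i)--(iv).

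Next I would check that the hypothesis $f\in\mathcal{F}_2\cap\mathcal{G}_{3,5}$ is strong enough to cover each of these groups. Case (i) is trivial, since the sole one-element class needs only the standing assumption $f(x)\neq 0$. Case (ii) needs $f\in\mathcal{F}_1$, and case (iii) needs $f\in\mathcal{F}_2$; both are supplied by $f\in\mathcal{F}_2$ because of the chain of inclusions $\mathcal{F}\supseteq\mathcal{F}_1\supseteq\mathcal{F}_2$ noted in Section \ref{sect:4}. Case (iv) requires $f\in\mathcal{F}_1\cap\mathcal{G}_{3,5}$, and this is implied by our assumption $f\in\mathcal{F}_2\cap\mathcal{G}_{3,5}$. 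Invoking Theorem \ref{th:4.1} in each case then yields the invertibility of $[S]_f$.

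For the number-theoretic consequence, I would specialise to $(P,\le)=(\mathbb{Z}_+,\mid)$ and $f=N$. Here every gcd-closed set is meet closed, so it suffices to show $N\in\mathcal{F}_2\cap\mathcal{G}_{3,5}$. Both memberships are recorded in Remark \ref{rem:B-L}: $N\in\mathcal{F}_2$ holds because any gcd-closed set built by the methods $(\mathrm{M}_{1,i})$ and $(\mathrm{M}_{2,i})$ has invertible LCM matrix (Corollary \ref{cor:3.1}), and $N\in\mathcal{G}_{3,5}$ follows from Corollary \ref{cor:3.2} applied with $n=5$ to the structure $\mathcal{S}_{3,5}=\mathcal{S}_{n-2,n}$.

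There is essentially no obstacle beyond verification; the only thing to be careful about is the bookkeeping of which of the twenty-two classes in Figure 2 falls under which part of Theorem \ref{th:4.1}, and the observation that $\mathcal{F}_2\cap\mathcal{G}_{3,5}\subseteq\mathcal{F}_1\cap\mathcal{G}_{3,5}$ so that part (iv) is genuinely covered. The content of the corollary is therefore packaging: the exhaustive classification of $5$-element meet semilattices in Figure 2 and the assembled function-class inclusions reduce the proof to citing earlier results.
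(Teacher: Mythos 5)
Your proposal is correct and follows the same route as the paper: the first part is obtained by running through the four cases of Theorem \ref{th:4.1} and noting that $\mathcal{F}_2\cap\mathcal{G}_{3,5}$ is contained in each of the required function classes, and the second part specialises to $f=N$ using $N\in\mathcal{F}_2\cap\mathcal{G}_{3,5}$ from Remark \ref{rem:B-L}. The paper's own proof is just a terser version of this same argument.
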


\begin{proof}
The first part is a direct consequence of Theorem \ref{th:4.1}. For the second part we just have to recall that $N\in{\cal F}_2\cap{\cal G}_{3,5}$ by Remark \ref{rem:B-L}.
\end{proof}

\subsection{Case \texorpdfstring{$n=6$}{n=6}}\label{subsect:4.6}

For $n\geq 6$ we change our procedure slightly, since there are $53$ classes of meet closed sets for $n=6$ and $222$ for $n=7$ (see e.g. \cite{HeiRei}, the number of meet semilattices with $n$ elements equals the number of lattices with $n+1$ elements, since adding a maximum element to a meet semilattice results a lattice). Here we construct only the meet closed sets with $6$ elements, where at least one of $m_1,\ldots,m_{n}$ is greater than or equal to $3$. (If  $m_1,\ldots,m_{n}\le 2$, then the Bourque-Ligh conjecture holds by Remark \ref{rem:B-L}.) We obtain exactly 7 different classes $6_{\rm A},6_{\rm B},\ldots,6_{\rm G}$ presented in Figure 3. In each class there can be no more than one element $x_i$ with $m_i\geq3$ and there exists exactly one class with $m_i=4$. Keeping this in mind the use of mathematical programs is not necessarily needed in order to find all 7 classes, but it would be easy to do so by making suitable adjustments to the code given in Remark \ref{re:calc}. The value of $\mu(x_i, x_6)$ is again marked next to each element $x_i$, and the white points stand for the last added element $x_6$.

\setcounter{subfigure}{0}
\begin{figure}[htb!]
\centering
\subfigure[$6_{\rm A}$]
{{\scalefont{0.4}
\begin{tikzpicture}[scale=0.55]
\draw (1,0)--(0.2,1)--(1,2)--(1,2.9);
\draw (1,0)--(1,2);
\draw (1,0)--(1.8,1)--(1,2);
\draw [fill] (0.2,1) circle [radius=0.1];
\draw [fill] (1,1) circle [radius=0.1];
\draw [fill] (1.8,1) circle [radius=0.1];
\draw [fill] (1,0) circle [radius=0.1];
\draw [fill] (1,2) circle [radius=0.1];
\draw (1,3) circle [radius=0.1];
\node [right] at (1,0) {$0$};
\node [right] at (0.2,1) {$0$};
\node [right] at (1,1) {$0$};
\node [right] at (1.8,1) {$0$};
\node [right] at (1,2) {\textrm{-}$1$};
\node [right] at (1,3) {$1$};
\end{tikzpicture}}
}
\subfigure[$6_{\rm B}$]
{{\scalefont{0.4}
\begin{tikzpicture}[scale=0.55]
\draw (1,0)--(0.2,1)--(1,2);
\draw (1,0)--(1,2);
\draw (0.2,1)--(0.2,1.9);
\draw (1,0)--(1.8,1)--(1,2);
\draw [fill] (0.2,1) circle [radius=0.1];
\draw [fill] (1,1) circle [radius=0.1];
\draw [fill] (1.8,1) circle [radius=0.1];
\draw [fill] (1,0) circle [radius=0.1];
\draw [fill] (1,2) circle [radius=0.1];
\draw (0.2,2) circle [radius=0.1];
\node [right] at (1,0) {$0$};
\node [right] at (1,2) {$0$};
\node [right] at (1,1) {$0$};
\node [right] at (1.8,1) {$0$};
\node [right] at (0.2,1) {\textrm{-}$1$};
\node [right] at (0.2,2) {$1$};
\end{tikzpicture}}
}
\subfigure[$6_{\rm C}$]
{{\scalefont{0.4}
\begin{tikzpicture}[scale=0.55]
\draw (1,0)--(0.2,1)--(1,2);
\draw (1,0)--(1,2);
\draw (1,0)--(1.9,0.295);
\draw (1,0)--(1.8,1)--(1,2);
\draw [fill] (0.2,1) circle [radius=0.1];
\draw [fill] (1,1) circle [radius=0.1];
\draw [fill] (1.8,1) circle [radius=0.1];
\draw [fill] (1,0) circle [radius=0.1];
\draw [fill] (1,2) circle [radius=0.1];
\draw (2,0.32) circle [radius=0.1];
\node [right] at (0.9,-0.15) {\textrm{-}$1$};
\node [right] at (0.2,1) {$0$};
\node [right] at (1,1) {$0$};
\node [right] at (1.8,1) {$0$};
\node [right] at (1,2) {$0$};
\node [right] at (2,0.35) {$1$};
\end{tikzpicture}}
}
\subfigure[$6_{\rm D}$]
{{\scalefont{0.4}
\begin{tikzpicture}[scale=0.55]
\draw (1,0)--(0.2,1)--(0.92,1.93);
\draw (1,-1)--(1,1.9);
\draw (1,0)--(1.8,1)--(1.08,1.93);
\draw [fill] (0.2,1) circle [radius=0.1];
\draw [fill] (1,1) circle [radius=0.1];
\draw [fill] (1.8,1) circle [radius=0.1];
\draw [fill] (1,0) circle [radius=0.1];
\draw (1,2) circle [radius=0.1];
\draw [fill] (1,-1) circle [radius=0.1];
\node [right] at (1,0) {$2$};
\node [right] at (0.2,1) {\textrm{-}$1$};
\node [right] at (1,1) {\textrm{-}$1$};
\node [right] at (1.8,1) {\textrm{-}$1$};
\node [right] at (1,2) {$1$};
\node [right] at (1,-1) {$0$};
\end{tikzpicture}}
}
\subfigure[$6_{\rm E}$]
{{\scalefont{0.4}
\begin{tikzpicture}[scale=0.55]
\draw (1,-1)--(0.2,1)--(0.92,1.93);
\draw (1,-1)--(1,1.9);
\draw (1,-1)--(1.8,1)--(1.08,1.93);
\draw [fill] (0.2,1) circle [radius=0.1];
\draw [fill] (1,1) circle [radius=0.1];
\draw [fill] (1.8,1) circle [radius=0.1];
\draw [fill] (1,0) circle [radius=0.1];
\draw (1,2) circle [radius=0.1];
\draw [fill] (1,-1) circle [radius=0.1];
\node [right] at (1,0.1) {$0$};
\node [right] at (0.2,1) {\textrm{-}$1$};
\node [right] at (1,1) {\textrm{-}$1$};
\node [right] at (1.8,1) {\textrm{-}$1$};
\node [right] at (1,2) {$1$};
\node [right] at (1,-1) {$2$};
\end{tikzpicture}}
}
\subfigure[$6_{\rm F}$]
{{\scalefont{0.4}
\begin{tikzpicture}[scale=0.55]
\draw (1,-1)--(0.2,1)--(0.92,1.93);
\draw (1,-1)--(1,1.9);
\draw (1,0)--(1.8,1)--(1.08,1.93);
\draw [fill] (0.2,1) circle [radius=0.1];
\draw [fill] (1,1) circle [radius=0.1];
\draw [fill] (1.8,1) circle [radius=0.1];
\draw [fill] (1,0) circle [radius=0.1];
\draw (1,2) circle [radius=0.1];
\draw [fill] (1,-1) circle [radius=0.1];
\node [right] at (1,0) {$1$};
\node [right] at (0.2,1) {\textrm{-}$1$};
\node [right] at (1,1) {\textrm{-}$1$};
\node [right] at (1.8,1) {\textrm{-}$1$};
\node [right] at (1,2) {$1$};
\node [right] at (1,-1) {$1$};
\end{tikzpicture}}
}
\subfigure[$6_{\rm G}$]
{{\scalefont{0.4}
\begin{tikzpicture}[scale=0.55]
\draw (1,0)--(-0.75,1)--(0.92,1.92);
\draw (1,0)--(0.2,1)--(0.92,1.92);
\draw (1,0)--(1.8,1)--(1.08,1.92);
\draw (1,0)--(2.75,1)--(1.08,1.92);
\draw [fill] (0.2,1) circle [radius=0.1];
\draw [fill] (1.8,1) circle [radius=0.1];
\draw [fill] (1,0) circle [radius=0.1];
\draw [fill] (2.75,1) circle [radius=0.1];
\draw [fill] (-0.75,1) circle [radius=0.1];
\draw (1,2) circle [radius=0.1];
\node [right] at (1,-0.05) {$3$};
\node [right] at (0.2,1) {\textrm{-}$1$};
\node [right] at (-0.75,1) {\textrm{-}$1$};
\node [right] at (2.75,1) {\textrm{-}$1$};
\node [right] at (1.8,1) {\textrm{-}$1$};
\node [right] at (1,2) {$1$};
\end{tikzpicture}}
}
\centerline{\bf Figure 3.}
\end{figure}

\begin{theorem}\label{th:4.4}
Let $S$ be a meet closed set with $6$ elements.
\begin{itemize}
\item[\rm(i)] If $S\not\in 6_{\rm A},6_{\rm B},\ldots,6_{\rm G}$ and $f\in{\cal F}_2$, then $[S]_f$ is invertible.
\item[\rm(ii)] If $S\in 6_{\rm A},6_{\rm B},\ldots,6_{\rm E}$ and $f\in{\cal F}_1\cap {\cal G}_{3, 5}$, then $[S]_f$ is invertible.
\item[\rm(iii)] If $S\in 6_{\rm F}={\cal S}_{3,6}$ and $f\in {\cal F}_1\cap{\cal G}_{3, 6}$, then $[S]_f$ is invertible.
\item[\rm(iv)] If $S\in 6_{\rm G}={\cal S}_{4,6}$ and $f\in{\cal F}_1\cap{\cal G}_{4, 6}$, then $[S]_f$ is invertible.
\end{itemize}
\end{theorem}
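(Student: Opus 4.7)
The plan is to invoke Theorem~\ref{th:3.2}, which reduces the invertibility of $[S]_f$ to the satisfaction of every condition in the sequence $(\mathrm{C}_{m_1,1}),\ldots,(\mathrm{C}_{m_6,6})$ along a construction of $S$. For part~(i), the classification in the paragraph preceding the theorem states that a six-element meet closed set lies outside $6_A,\ldots,6_G$ exactly when it admits a construction with every $m_i\le 2$. For such $S$, Definition~\ref{de:F_k} gives that any $f\in\mathcal{F}_2$ satisfies the entire condition sequence, and Theorem~\ref{th:3.2} yields the invertibility.

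For parts~(ii)--(iv) I would argue class by class, exhibiting in each case a concrete linear extension in which the white point of Figure~3 becomes $x_6$, and then reading off the method sequence. For $6_A$, $6_B$, $6_C$ the first five elements form an $\mathcal{S}_{3,5}$ (bottom, three incomparable middle elements, and the element at $(1,2)$), so the construction uses an $(\mathrm{M}_{3,5})$-step at position $5$, and then attaches the white point by an $(\mathrm{M}_{1,6})$-step. For $6_D$ and $6_E$ the exceptional step is $(\mathrm{M}_{3,6})$ at position~$6$; for $6_F=\mathcal{S}_{3,6}$ it is $(\mathrm{M}_{3,6})$; and for $6_G=\mathcal{S}_{4,6}$ it is $(\mathrm{M}_{4,6})$. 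In every one of the seven classes the other five steps have $m_i=1$, so the corresponding $(\mathrm{C}_{1,i})$ conditions are absorbed by the assumption $f\in\mathcal{F}_1$.

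The crux is the single nontrivial condition $(\mathrm{C}_{m_k,k})$ in each class. Writing it out via~\eqref{eq:Cm2}, one uses the Möbius values $\mu_S(x_i,x_k)$ already marked in Figure~3 (verified by the recursion~\eqref{eq:mu_Srec}) and observes that the elements with $\mu_S(x_i,x_k)=0$ drop out of the sum. What remains is a linear combination of the values $1/f(x_i)$ supported on an $\mathcal{S}_{3,5}$ substructure of $S$ (in $6_A$--$6_E$), on an $\mathcal{S}_{3,6}$ (in $6_F$), or on an $\mathcal{S}_{4,6}$ (in $6_G$), with coefficients $(2,-1,-1,-1,1)$, $(1,1,-1,-1,-1,1)$, or $(3,-1,-1,-1,-1,1)$ respectively, which agree with the intrinsic Möbius values of the substructure. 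Hence $(\mathrm{C}_{m_k,k})$ is literally the defining inequality of $\mathcal{G}_{3,5}$, $\mathcal{G}_{3,6}$, or $\mathcal{G}_{4,6}$ applied to that substructure, and the hypothesis on $f$ supplies it.

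The main obstacle is this Möbius bookkeeping---particularly in $6_D$ and $6_E$, where one must check that even though the intermediate Möbius values $\mu_S(y,z)$ for $z<x_6$ do change when the ``extra'' element (carrying $\mu_S(\cdot,x_6)=0$) is inserted, the final sums $\sum_i (1/f)(x_i)\mu_S(x_i,x_6)$ in the full set $S$ and in the five-element support agree. This is a finite verification already encoded in Figure~3. Once it is done, Theorem~\ref{th:3.2} immediately closes the proof.
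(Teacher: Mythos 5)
Your proposal is correct and follows essentially the same route as the paper: reduce to Theorem~\ref{th:3.2}, absorb the $(\mathrm{M}_{1,i})$ and $(\mathrm{M}_{2,i})$ steps into $\mathcal{F}_1$ or $\mathcal{F}_2$, and observe that the single exceptional condition collapses (via the zeros of $\mu_S(\cdot,x_6)$ in $6_{\rm D}$, $6_{\rm E}$) onto the defining inequality of the relevant class $\mathcal{G}_{3,5}$, $\mathcal{G}_{3,6}$ or $\mathcal{G}_{4,6}$. The only cosmetic difference is that the paper disposes of case (iv) by citing Theorem~\ref{th:3.4}, whereas you invoke the definition of $\mathcal{G}_{4,6}$ directly; these are equivalent.
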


\begin{proof}
(i) If $S\not\in 6_{\rm A},6_{\rm B},\ldots,6_{\rm G}$, then only {\rm(M$_{1,i}$)} and {\rm(M$_{2,i}$)} have been used, and thus the condition $f\in{\cal F}_2$ assures that $[S]_f$ is invertible, see Definition \ref{de:F_k}. (ii) If $S\in 6_{\rm A},6_{\rm B},6_{\rm C}, 6_{\rm D}, 6_{\rm E}$, then $S$ can be constructed by (M$_{1,i}$) and (M$_{3,i}$), and thus the assumption $f\in{\cal F}_1$ together with $f\in{\cal G}_{3, 5}$ assures the fulfillment of conditions (C$_{1,i}$) and (C$_{3,i}$) and therefore the invertibility of $[S]_f$. For $S\in 6_{\rm A}, 6_{\rm B}, 6_{\rm C}$ the condition (C$_{3,5}$) is clearly implied by $f$ belonging to ${\cal G}_{3, 5}$, and also for $S\in 6_{\rm D}, 6_{\rm E}$ the condition (C$_{3,6}$) is implied by the assumption $f\in {\cal G}_{3, 5}$ due to the zeros of $\mu_S(x_i, x_6)$ in $6_{\rm D},6_{\rm E}$ of Figure~3. (iii) In the case when $S\in 6_{\rm F}$ the semilattice $S_{n-1}$ can be constructed by (M$_{1,i}$) and $S$ can be constructed by (M$_{3,6}$) from $S_{n-1}$. In this case the assumption $f\in{\cal F}_1$ quarantees that the conditions (C$_{1,i}$) hold, whereas $f\in{\cal G}_{3,6}$ implies that (C$_{3,6}$) holds. Thus $[S]_f$ is invertible. (iv) If $S\in 6_{\rm G}$, then the conditions for the invertibility of $[S]_f$ come from those in Theorem \ref{th:3.4}.
\end{proof}

\begin{corollary}\label{cor:4.4}
If $S$ is a meet closed set with $6$ elements and $f\in{\cal F}_2\cap {\cal G}_{3, 5}\cap {\cal G}_{3, 6}\cap {\cal G}_{4, 6}$, then $[S]_f$ is invertible. In particular, if $S$ is a gcd-closed set with $6$ elements, then $[S]$ is invertible.
\end{corollary}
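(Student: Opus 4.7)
The plan is to dissect the first assertion by running through the seven exceptional classes in Figure~3 and the generic case, matching each with the appropriate part of Theorem~\ref{th:4.4}. For $S\notin\{6_{\rm A},\ldots,6_{\rm G}\}$ I invoke part (i), which requires only $f\in{\cal F}_2$; for $S\in\{6_{\rm A},\ldots,6_{\rm E}\}$ part (ii) asks for $f\in{\cal F}_1\cap{\cal G}_{3,5}$; for $S=6_{\rm F}$ part (iii) asks for $f\in{\cal F}_1\cap{\cal G}_{3,6}$; and for $S=6_{\rm G}$ part (iv) asks for $f\in{\cal F}_1\cap{\cal G}_{4,6}$. Since ${\cal F}_2\subseteq{\cal F}_1$, the common hypothesis $f\in{\cal F}_2\cap{\cal G}_{3,5}\cap{\cal G}_{3,6}\cap{\cal G}_{4,6}$ subsumes all four cases and yields the first statement.

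For the Bourque-Ligh half it is enough, by what has just been said, to verify $N\in{\cal F}_2\cap{\cal G}_{3,5}\cap{\cal G}_{3,6}\cap{\cal G}_{4,6}$. Remark~\ref{rem:B-L} already records $N\in{\cal F}_2$, $N\in{\cal G}_{3,5}$ and $N\in{\cal G}_{4,6}$, so the only remaining task is $N\in{\cal G}_{3,6}$. Reading off the M\"obius values of ${\cal S}_{3,6}$ from Figure~1(d), the condition $({\textstyle\frac{1}{N}}\ast_S\mu_S)(x_6)\ne 0$ reduces to the numerical claim
\[ \frac{1}{x_1}+\frac{1}{x_2}+\frac{1}{x_6}\ne \frac{1}{x_3}+\frac{1}{x_4}+\frac{1}{x_5} \]
for every gcd-closed $S=\{x_1,\ldots,x_6\}\in{\cal S}_{3,6}$.

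To settle this I would exploit the structural data encoded in $6_{\rm F}$: the chain $x_1\mid x_2$ with $x_1<x_2$ forces $x_2/x_1\ge 2$, and the meet-closure requirement that $\gcd(x_i,x_j)\in S$ for distinct $i,j\in\{3,4,5\}$ together with the diagram (which places $x_2$ directly below each of $x_3,x_4,x_5$ with no other intermediate element of $S$) forces $\gcd(x_i,x_j)=x_2$, so $x_k=a_k x_2$ with $a_3,a_4,a_5$ pairwise coprime, pairwise distinct integers $\ge 2$. The smallest admissible triple is $(2,3,5)$, giving $\tfrac{1}{x_3}+\tfrac{1}{x_4}+\tfrac{1}{x_5}\le \tfrac{1}{x_2}(\tfrac{1}{2}+\tfrac{1}{3}+\tfrac{1}{5})=\tfrac{31}{30x_2}$, whereas $\tfrac{1}{x_1}+\tfrac{1}{x_2}\ge \tfrac{3}{x_2}$. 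Adding the positive term $\tfrac{1}{x_6}$ keeps the inequality strict, so $({\textstyle\frac{1}{N}}\ast_S\mu_S)(x_6)>0$ and $N\in{\cal G}_{3,6}$. The only point requiring care is confirming that meet-closure genuinely forces the pairwise coprimality of $a_3,a_4,a_5$; since this is immediate from $\gcd(x_i,x_j)=x_2$ and $x_k=a_k x_2$, there is no real obstacle.
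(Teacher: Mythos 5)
The first half of your argument --- distributing the cases of Theorem \ref{th:4.4} over the hypothesis $f\in{\cal F}_2\cap{\cal G}_{3,5}\cap{\cal G}_{3,6}\cap{\cal G}_{4,6}$ using ${\cal F}_2\subseteq{\cal F}_1$ --- is exactly the paper's argument, and your reduction of the second half to the single claim $N\in{\cal G}_{3,6}$ is also correct, as is the displayed inequality $\frac{1}{x_1}+\frac{1}{x_2}+\frac{1}{x_6}\neq\frac{1}{x_3}+\frac{1}{x_4}+\frac{1}{x_5}$ read off from Figure 1(d).

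However, your proof of that inequality rests on a misreading of the Hasse diagram of ${\cal S}_{3,6}$, and this is a genuine gap. In ${\cal S}_{3,6}$ (class $6_{\rm F}$) the element $x_2$ is \emph{not} below all three of $x_3,x_4,x_5$: two of them cover $x_2$, but the third covers $x_1$ directly and satisfies $x_3\wedge x_2=x_1$. Indeed, the M\"obius values you yourself use force this: if all of $x_3,x_4,x_5$ lay above $x_2$ one would get $\mu_S(x_2,x_6)=2$ and $\mu_S(x_1,x_6)=0$ (that configuration is essentially $6_{\rm D}$, handled via ${\cal G}_{3,5}$), not the values $+1,+1$ of Figure 1(d). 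Consequently your key estimate $\frac{1}{x_3}+\frac{1}{x_4}+\frac{1}{x_5}\le\frac{1}{x_2}\bigl(\frac12+\frac13+\frac15\bigr)$ is unjustified and in fact false for the true structure: taking $x_1=1$, $x_2$ large and $x_3=3$ already gives $\frac{1}{x_3}=\frac13\gg\frac{31}{30x_2}$, so the chain of inequalities breaks at its first link. The correct parametrization is $x_2=ax_1$, $x_3=bx_1$, $x_4=acx_1$, $x_5=adx_1$ with $a,b,c,d\ge2$ and $\gcd(a,b)=\gcd(b,c)=\gcd(b,d)=\gcd(c,d)=1$; the coprimality of $c$ and $d$ forces $cd-c-d>0$, and the paper concludes by writing the quantity as $\frac{1}{x_6}+\frac{acd(b-1)+b(cd-c-d)}{abcdx_1}>0$. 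Your crude bound can be repaired along the same lines (one needs $1-\frac1b+\frac1a\bigl(1-\frac1c-\frac1d\bigr)>0$), but as written the argument does not establish $N\in{\cal G}_{3,6}$.
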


\begin{proof}
The first part of this corollary is obvious, since ${\cal F}_2 \subseteq {\cal F}_1$. We only need to prove the second part. We already know that $N\in {\cal F}_2\cap {\cal G}_{3, 5}\cap {\cal G}_{4, 6}$ (Remark \ref{rem:B-L} and Corollary \ref{cor:3.2}), so it suffices to prove that $N\in {\cal G}_{3, 6}$. Let $S\in 6_{\rm F}$,
\[
x_1=\hbox{gcd}(x_2,x_3)=\hbox{gcd}(x_3,x_4)=\hbox{gcd}(x_3,x_5),
\]
$x_2=\hbox{gcd}(x_4,x_5)$ and $\hbox{lcm}(x_3,x_4,x_5)\mid x_6$. Thus $x_2=ax_1$, $x_3=bx_1$, $x_4=acx_1$, $x_5=adx_1$, where $a, b, c, d\geq2$ and
\[
\hbox{gcd}(a,b)=\hbox{gcd}(b,c)=\hbox{gcd}(b,d)=\hbox{gcd}(c,d)=1.
\]
Therefore at least one of the numbers $c$ and $d$ must be greater than or equal to $3$, from which it follows that $cd-c-d>0$. Clearly we also have $b-1>0$ and $x_1,x_6>0$ and thus we obtain
\begin{eqnarray}\label{eq:4.10}
&&\frac{1}{x_6}-\frac{1}{x_5}-\frac{1}{x_4}-\frac{1}{x_3}+\frac{1}{x_2}+\frac{1}{x_1}=\\
&&\qquad\qquad\quad=\;\frac{1}{x_6}+\frac{-bc-bd-acd+bcd+abcd}{abcdx_1} \nonumber\\
&&\qquad\qquad\quad=\;\frac{1}{x_6}+\frac{acd(b-1)+b(cd-d-c)}{abcdx_1}>0.\nonumber
\end{eqnarray}
This implies that $N\in {\cal G}_{3, 6}$.
\end{proof}

\subsection{Case \texorpdfstring{$n=7$}{n=7}}\label{subsect:4.7}

As in the case $n=6$, we consider only the meet closed sets with $7$ elements, where at least one of $m_1,\ldots,m_{n}$ is greater than or equal to $3$. There are exactly 47 such semilattices, which we divide into ten categories $7_{\rm A},7_{\rm B},\ldots,7_{\rm I}$ based on their structure, see Figures 4-8. As before, we have marked the value of $\mu_S(x_i,x_7)$ next to each element $x_i$, and the last added elements $x_7$ are denoted by white points.

\begin{remark}\label{re:calc}
In the case $n=6$ it is well possible to find all meet semilattices in Figure 3 without any computer calculations. As one might expect, in the case $n=7$ the task of finding all meet semilattices with at least one $m_i\geq3$ without any help from a computer becomes quite overwhelming. With Sage 5.10 this can easily be done by using the command
\begin{center}
\verb+P7=[p for p in Posets(7) if p.is_meet_semilattice() and+\\
\verb+max([len(p.lower_covers(q)) for q in p.list()]) >= 3]+.
\end{center}
With the command
\begin{center}
\verb+for p in P7: show(p.plot())+
\end{center}
it is then possible to obtain the list of Hasse diagrams of the meet semilattices in question.
\end{remark}

\begin{theorem}\label{th:4.5}
Let $S$ be a meet closed set with $7$ elements.
\begin{itemize}
\item[\rm(i)] If $S$ does not belong to any classes presented in Figures 4-8 and $f\in{\cal F}_2$, then $[S]_f$ is invertible.
\item[\rm(ii)] If $S\in 7_{\rm AA},7_{\rm AB},\ldots,7_{\rm AX}$ and $f\in{\cal F}_1\cap {\cal G}_{3, 5}$, then $[S]_f$ is invertible.
\item[\rm(iii)] If $S\in 7_{\rm BA},7_{\rm BB},\ldots,7_{\rm BI}$ and $f\in {\cal F}_1\cap {\cal G}_{3, 6}$, then $[S]_f$ is invertible.
\item[\rm(iv)] If $S\in 7_{\rm CA},7_{\rm CB},7_{\rm CC},7_{\rm CD},7_{\rm CE}$ and $f\in{\cal F}_2\cap {\cal G}_{3, 5}$, then $[S]_f$ is invertible.
\item[\rm(v)] If $S\in 7_{\rm DA},7_{\rm DB},7_{\rm DC},7_{\rm DD},7_{\rm DE}$ and $f\in{\cal F}_1\cap {\cal G}_{4, 6}$, then $[S]_f$ is invertible.
\item[\rm(vi)] If $S\in 7_{\rm E}$ and $f\in {\cal F}_2\cap{\cal G}_{3, 6}$, then $[S]_f$ is invertible.
\item[\rm(vii)] If $S\in 7_{\rm F}$ and $f\in {\cal F}_2\cap{\cal G}_{3, 7}$, then $[S]_f$ is invertible.
\item[\rm(viii)] If $S\in 7_{\rm G}$ and $f\in {\cal F}_1\cap{\cal G}_{4, 7}^{(1)}$, then $[S]_f$ is invertible.
\item[\rm(ix)] If $S\in 7_{\rm H}$ and $f\in {\cal F}_1\cap{\cal G}_{4, 7}^{(2)}$, then $[S]_f$ is invertible.
\item[\rm(x)] If $S\in 7_{\rm I}$ and $f\in{\cal F}_1\cap {\cal G}_{5, 7}$, then $[S]_f$ is invertible.
\end{itemize}
\end{theorem}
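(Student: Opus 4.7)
The plan is to mirror the structure of the proof of Theorem~\ref{th:4.4}, decomposing each case according to the construction sequence $(\mathrm{M}_{m_1,1}),\ldots,(\mathrm{M}_{m_7,7})$ and invoking Theorem~\ref{th:3.2}, which reduces the invertibility of $[S]_f$ to the validity of the corresponding condition sequence $(\mathrm{C}_{m_1,1}),\ldots,(\mathrm{C}_{m_7,7})$. For every case I would first read off from the Hasse diagram of $S$ which values of $m_i$ occur; the assumption $f\in {\cal F}_k$ automatically delivers all conditions $(\mathrm{C}_{m_i,i})$ with $m_i\le k$, so only the conditions with $m_i\ge k+1$ remain to be verified, and these in practice reduce to the single condition $(\mathrm{C}_{m_7,7})$ coming from the last added element.

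For case (i) the semilattice is constructed using only $(\mathrm{M}_{1,i})$ and $(\mathrm{M}_{2,i})$, so $f\in {\cal F}_2$ suffices by Definition~\ref{de:F_k}. For cases (ii)--(v) the semilattice $S$ is constructed from an $S_6$ by a single step $(\mathrm{M}_{m_7,7})$ with $m_7\ge 3$, and the key observation (already used in the proof of Theorem~\ref{th:4.4}) is that only those $x_i$ for which $\mu_S(x_i,x_7)\ne 0$ contribute to $(\mathrm{C}_{m_7,7})$ via formula~\eqref{eq:Cm2}. One therefore deletes from the Hasse diagram the elements whose Möbius value vanishes, and verifies that the resulting ``effective'' semilattice (together with $x_7$) is isomorphic to one of the canonical classes ${\cal S}_{3,5}$, ${\cal S}_{3,6}$, or ${\cal S}_{4,6}$ appearing in Figure~1. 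When that is so, the assumption $f\in{\cal G}_{k,n}$ for the corresponding pair $(k,n)$ delivers $(\mathrm{C}_{m_7,7})$ directly from Definition~4.1, while the remaining conditions are covered by the ${\cal F}_1$ or ${\cal F}_2$ part of the hypothesis.

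For cases (vi)--(x) the last-step structure is intrinsically of size $n=6$ or $n=7$: in each of these cases the relevant subsemilattice (after killing the Möbius-zero elements) already is ${\cal S}_{3,6}$, ${\cal S}_{3,7}$, ${\cal S}_{4,7}^{(1)}$, ${\cal S}_{4,7}^{(2)}$, or ${\cal S}_{5,7}$, respectively, and the membership $f\in{\cal G}_{3,6},{\cal G}_{3,7},{\cal G}_{4,7}^{(1)},{\cal G}_{4,7}^{(2)}$ or ${\cal G}_{5,7}$ produces $(\mathrm{C}_{m_7,7})$. The internal conditions $(\mathrm{C}_{m_i,i})$ for $i<7$ are handled by the ${\cal F}_1$ or ${\cal F}_2$ hypothesis, where the stronger ${\cal F}_2$ is required precisely in cases (iv), (vi), (vii), i.e., whenever the construction of $S_6$ itself used a step $(\mathrm{M}_{2,i})$.

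The main obstacle I expect is purely combinatorial rather than algebraic: one must verify, for each of the $47$ classes enumerated in Figures~4--8, that the procedure of deleting all $x_i$ with $\mu_S(x_i,x_7)=0$ does yield one of the canonical shapes ${\cal S}_{k,n}$ in Figure~1, and that the corresponding Möbius values (as marked in Figures 4--8) agree with those in Figure~1 so that condition~\eqref{eq:Cm2} reduces exactly to the defining equation of the relevant ${\cal G}_{k,n}$. Once this case-by-case isomorphism check is carried out (which is precisely what the authors have encoded by grouping the semilattices into ten categories $7_{\rm A},\ldots,7_{\rm I}$), each of the ten claims becomes a direct consequence of Theorem~\ref{th:3.2}, Definition~\ref{de:F_k}, and Definition~4.1.
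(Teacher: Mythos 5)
Your proposal follows the paper's own proof essentially verbatim in strategy: decompose by the method sequence, let ${\cal F}_1$ or ${\cal F}_2$ absorb the conditions $({\rm C}_{m_i,i})$ with small $m_i$, and discharge the remaining condition by deleting the elements with vanishing M\"obius value and matching the residual structure against the canonical classes of Figure~1, so that the appropriate ${\cal G}_{k,n}$ hypothesis applies. The one inaccuracy worth flagging is your claim that the outstanding conditions ``reduce to the single condition $({\rm C}_{m_7,7})$'' and that in cases (ii)--(v) the set $S$ ``is constructed from an $S_6$ by a single step $({\rm M}_{m_7,7})$ with $m_7\ge 3$.'' For a large portion of the $47$ classes (e.g.\ $7_{\rm AA}$--$7_{\rm AS}$ and most of the $7_{\rm B}$ and $7_{\rm D}$ classes) the white point $x_7$ is added by $({\rm M}_{1,7})$ or $({\rm M}_{2,7})$, and the step with $m_i\ge 3$ occurs at some intermediate index $i<7$; the condition to be checked is then $({\rm C}_{m_i,i})$, governed by $\mu_{S_i}(\cdot,x_i)$ rather than by the values $\mu_S(\cdot,x_7)$ marked in Figures~4--8. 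This does not break the argument --- the same deletion-and-matching procedure applies at stage $i$, exactly as the paper does for $6_{\rm A}$, $6_{\rm B}$, $6_{\rm C}$ in Theorem~\ref{th:4.4} --- but if one literally verified only $({\rm C}_{m_7,7})$ the intermediate condition $({\rm C}_{3,i})$ or $({\rm C}_{4,i})$ would go unchecked, since ${\cal F}_1$ (resp.\ ${\cal F}_2$) covers only the steps with $m_i\le 1$ (resp.\ $m_i\le 2$). With that localization corrected, your outline coincides with the paper's proof, including the acknowledged combinatorial burden of the case-by-case isomorphism check.
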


\setcounter{subfigure}{0}
\begin{figure}[ht!]
% \pgfplotsset{compat=1.7}
\centering
\subfigure[$7_{\rm AA}$]
{{\scalefont{0.4}
\begin{tikzpicture}[scale=0.55]
\draw (1,0)--(0.2,1)--(1,2)--(1,2.7);
\draw (1,0)--(1,2);
\draw (1,0)--(1.8,1)--(1,2);
\draw (1,0)--(1.9,0.295);
\draw [fill] (0.2,1) circle [radius=0.1];
\draw [fill] (1,1) circle [radius=0.1];
\draw [fill] (1.8,1) circle [radius=0.1];
\draw [fill] (1,0) circle [radius=0.1];
\draw [fill] (1,2) circle [radius=0.1];
\draw (2,0.32) circle [radius=0.1];
\draw [fill] (1,2.8) circle [radius=0.1];
\node [right] at (1,-0.1) {\textrm{-}$1$};
\node [right] at (0.2,1) {$0$};
\node [right] at (1,1) {$0$};
\node [right] at (1.8,1) {$0$};
\node [right] at (1,2) {$0$};
\node [right] at (1,2.8) {$0$};
\node [right] at (2,0.35) {$1$};
\end{tikzpicture}}
}
\subfigure[$7_{\rm AB}$]
{{\scalefont{0.4}
\begin{tikzpicture}[scale=0.55]
\draw (1,0)--(0.2,1)--(1,2)--(1,2.7);
\draw (1,0)--(1,2);
\draw (1,0)--(1.8,1)--(1,2);
\draw (0.2,1)--(0.2,1.9);
\draw [fill] (0.2,1) circle [radius=0.1];
\draw [fill] (1,1) circle [radius=0.1];
\draw [fill] (1.8,1) circle [radius=0.1];
\draw [fill] (1,0) circle [radius=0.1];
\draw [fill] (1,2) circle [radius=0.1];
\draw (0.2,2) circle [radius=0.1];
\draw [fill] (1,2.8) circle [radius=0.1];
\node [right] at (1,-0.1) {$0$};
\node [right] at (0.2,1) {\textrm{-}$1$};
\node [right] at (1,1) {$0$};
\node [right] at (1.8,1) {$0$};
\node [right] at (1,2) {$0$};
\node [right] at (1,2.8) {$0$};
\node [right] at (0.2,2) {$1$};
\end{tikzpicture}}
}
\subfigure[$7_{\rm AC}$]
{{\scalefont{0.4}
\begin{tikzpicture}[scale=0.55]
\draw (1,0)--(0.2,1)--(1,2)--(0.5,2.7);
\draw (1,0)--(1,2)--(1.45,2.62);
\draw (1,0)--(1.8,1)--(1,2);
\draw [fill] (0.2,1) circle [radius=0.1];
\draw [fill] (1,1) circle [radius=0.1];
\draw [fill] (1.8,1) circle [radius=0.1];
\draw [fill] (1,0) circle [radius=0.1];
\draw [fill] (1,2) circle [radius=0.1];
\draw [fill] (0.5,2.7) circle [radius=0.1];
\draw (1.5,2.7) circle [radius=0.1];
\node [right] at (1,0) {$0$};
\node [right] at (0.2,1) {$0$};
\node [right] at (1,1) {$0$};
\node [right] at (1.8,1) {$0$};
\node [right] at (1,2) {\textrm{-}$1$};
\node [right] at (0.5,2.7) {$0$};
\node [right] at (1.5,2.7) {$1$};
\end{tikzpicture}}
}
\subfigure[$7_{\rm AD}$]
{{\scalefont{0.4}
\begin{tikzpicture}[scale=0.55]
\draw (1,0)--(0.2,1)--(1,2)--(1,2.8);
\draw (1,0)--(1,2);
\draw (1,0)--(1.8,1)--(1,2);
\draw (1,2.8)--(1.9,3.095);
\draw (2,3.12) circle [radius=0.1];
\draw [fill] (0.2,1) circle [radius=0.1];
\draw [fill] (1,1) circle [radius=0.1];
\draw [fill] (1.8,1) circle [radius=0.1];
\draw [fill] (1,0) circle [radius=0.1];
\draw [fill] (1,2) circle [radius=0.1];
\draw [fill] (1,2.8) circle [radius=0.1];
\node [right] at (1,0) {$0$};
\node [right] at (0.2,1) {$0$};
\node [right] at (1,1) {$0$};
\node [right] at (1.8,1) {$0$};
\node [right] at (1,2) {$0$};
\node [right] at (1,2.7) {\textrm{-}$1$};
\node [right] at (2,3.12) {$1$};
\end{tikzpicture}}
}
\subfigure[$7_{\rm AE}$]
{{\scalefont{0.4}
\begin{tikzpicture}[scale=0.55]
\draw (1,0)--(0.2,1)--(1,2);
\draw (1,0)--(1,2);
\draw (0.2,1)--(0.2,1.9);
\draw (1,0)--(1.8,1)--(1,2);
\draw (1,0)--(1.9,0.295);
\draw [fill] (0.2,1) circle [radius=0.1];
\draw [fill] (1,1) circle [radius=0.1];
\draw [fill] (1.8,1) circle [radius=0.1];
\draw [fill] (1,0) circle [radius=0.1];
\draw [fill] (1,2) circle [radius=0.1];
\draw [fill] (0.2,2) circle [radius=0.1];
\draw (2,0.32) circle [radius=0.1];
\node [right] at (1,-0.1) {\textrm{-}$1$};
\node [right] at (1,2) {$0$};
\node [right] at (1,1) {$0$};
\node [right] at (1.8,1) {$0$};
\node [right] at (0.2,1) {$0$};
\node [right] at (0.2,2) {$0$};
\node [right] at (2,0.35) {$1$};
\end{tikzpicture}}
}
\subfigure[$7_{\rm AF}$]
{{\scalefont{0.4}
\begin{tikzpicture}[scale=0.55]
\draw (1,0)--(0.2,1)--(1,2);
\draw (1,0)--(1,2);
\draw (1,0)--(1.8,1)--(1,2);
\draw (0.2,1)--(0.2,1.9);
\draw (1.8,1)--(1.8,2);
\draw [fill] (0.2,1) circle [radius=0.1];
\draw [fill] (1,1) circle [radius=0.1];
\draw [fill] (1.8,1) circle [radius=0.1];
\draw [fill] (1,0) circle [radius=0.1];
\draw [fill] (1,2) circle [radius=0.1];
\draw (0.2,2) circle [radius=0.1];
\draw [fill] (1.8,2) circle [radius=0.1];
\node [right] at (1,-0.1) {$0$};
\node [right] at (0.2,1) {\textrm{-}$1$};
\node [right] at (1,1) {$0$};
\node [right] at (1.8,1) {$0$};
\node [right] at (1,2) {$0$};
\node [right] at (1.8,2) {$0$};
\node [right] at (0.2,2) {$1$};
\end{tikzpicture}}
}
\subfigure[$7_{\rm AG}$]
{{\scalefont{0.4}
\begin{tikzpicture}[scale=0.55]
\draw (1,0)--(0.2,1)--(1,2);
\draw (1,0)--(1,2);
\draw (1,0)--(1.8,1)--(1,2);
\draw (0.2,1)--(0.2,1.9);
\draw (0.2,1)--(-0.6,1.9);
\draw [fill] (0.2,1) circle [radius=0.1];
\draw [fill] (1,1) circle [radius=0.1];
\draw [fill] (1.8,1) circle [radius=0.1];
\draw [fill] (1,0) circle [radius=0.1];
\draw [fill] (1,2) circle [radius=0.1];
\draw (-0.6,2) circle [radius=0.1];
\draw [fill] (0.2,2) circle [radius=0.1];
\node [right] at (1,-0.1) {$0$};
\node [right] at (0.2,1) {\textrm{-}$1$};
\node [right] at (1,1) {$0$};
\node [right] at (1.8,1) {$0$};
\node [right] at (1,2) {$0$};
\node [right] at (0.2,2) {$0$};
\node [right] at (-0.6,2) {$1$};
\end{tikzpicture}}
}
\subfigure[$7_{\rm AH}$]
{{\scalefont{0.4}
\begin{tikzpicture}[scale=0.55]
\draw (1,0)--(0.2,1)--(1,2);
\draw (1,0)--(1,2);
\draw (0.2,1)--(0.2,2.9);
\draw (1,0)--(1.8,1)--(1,2);
\draw [fill] (0.2,1) circle [radius=0.1];
\draw [fill] (1,1) circle [radius=0.1];
\draw [fill] (1.8,1) circle [radius=0.1];
\draw [fill] (1,0) circle [radius=0.1];
\draw [fill] (1,2) circle [radius=0.1];
\draw [fill] (0.2,2) circle [radius=0.1];
\draw (0.2,3) circle [radius=0.1];
\node [right] at (0.2,3) {$1$};
\node [right] at (1,0) {$0$};
\node [right] at (1,2) {$0$};
\node [right] at (1,1) {$0$};
\node [right] at (1.8,1) {$0$};
\node [right] at (0.2,1) {$0$};
\node [right] at (0.2,2) {\textrm{-}$1$};
\end{tikzpicture}}
}
\subfigure[$7_{\rm AI}$]
{{\scalefont{0.4}
\begin{tikzpicture}[scale=0.55]
\draw (2,0)--(1,-0.5)--(0.08,-0.05);
\draw (1,-0.5)--(1,1)--(1,1.9);
\draw (1,-0.5)--(2,1)--(1.08,1.93);
\draw (1,-0.5)--(0,1)--(0.92,1.93);
\draw [fill] (2,0) circle [radius=0.1];
\draw [fill] (1,-0.5) circle [radius=0.1];
\draw (0,0) circle [radius=0.1];
\draw [fill] (0,1) circle [radius=0.1];
\draw [fill] (1,1) circle [radius=0.1];
\draw [fill] (1,2) circle [radius=0.1];
\draw [fill] (2,1) circle [radius=0.1];
\node [right] at (2,0) {$0$};
\node [right] at (1,-0.55) {\textrm{-}$1$};
\node [right] at (0,0.1) {$1$};
\node [right] at (0,1) {$0$};
\node [right] at (1,1) {$0$};
\node [right] at (2,1) {$0$};
\node [right] at (1,2) {$0$};
\end{tikzpicture}}
}
\subfigure[$7_{\rm AJ}$]
{{\scalefont{0.4}
\begin{tikzpicture}[scale=0.55]
\draw (2,-1)--(0.08,-0.05);
\draw (2,-1)--(1,1)--(1,1.9);
\draw (2,-1)--(2,1)--(1.08,1.93);
\draw (2,-1)--(0,1)--(0.92,1.93);
\draw [fill] (2,-1) circle [radius=0.1];
\draw [fill] (1,-0.5) circle [radius=0.1];
\draw (0,0) circle [radius=0.1];
\draw [fill] (0,1) circle [radius=0.1];
\draw [fill] (1,1) circle [radius=0.1];
\draw [fill] (1,2) circle [radius=0.1];
\draw [fill] (2,1) circle [radius=0.1];
\node [right] at (2,-1) {$0$};
\node [right] at (0.8,-0.8) {\textrm{-}$1$};
\node [right] at (0,0.1) {$1$};
\node [right] at (0,1) {$0$};
\node [right] at (1,1) {$0$};
\node [right] at (2,1) {$0$};
\node [right] at (1,2) {$0$};
\end{tikzpicture}}
}
\subfigure[$7_{\rm AK}$]
{{\scalefont{0.4}
\begin{tikzpicture}[scale=0.55]
\draw (1.92,-0.05)--(1,-0.5)--(0,0)--(0,1)--(0.92,1.93);
\draw (0,0)--(1,1)--(1,1.9);
\draw (0,0)--(2,1)--(1.08,1.93);
\draw (2,0) circle [radius=0.1];
\draw [fill] (1,-0.5) circle [radius=0.1];
\draw [fill] (0,0) circle [radius=0.1];
\draw [fill] (0,1) circle [radius=0.1];
\draw [fill] (1,1) circle [radius=0.1];
\draw [fill] (1,2) circle [radius=0.1];
\draw [fill] (2,1) circle [radius=0.1];
\node [right] at (2,0) {$1$};
\node [right] at (1,-0.55) {\textrm{-}$1$};
\node [right] at (0.1,0) {$0$};
\node [right] at (0,1) {$0$};
\node [right] at (1,1) {$0$};
\node [right] at (2,1) {$0$};
\node [right] at (1,2) {$0$};
\end{tikzpicture}}
}
\subfigure[$7_{\rm AL}$]
{{\scalefont{0.4}
\begin{tikzpicture}[scale=0.55]
\draw (2,-1)--(0.08,-0.05);
\draw (1,-0.5)--(1,1)--(1,1.9);
\draw (1,-0.5)--(2,1)--(1.08,1.93);
\draw (1,-0.5)--(0,1)--(0.92,1.93);
\draw [fill] (2,-1) circle [radius=0.1];
\draw [fill] (1,-0.5) circle [radius=0.1];
\draw (0,0) circle [radius=0.1];
\draw [fill] (0,1) circle [radius=0.1];
\draw [fill] (1,1) circle [radius=0.1];
\draw [fill] (1,2) circle [radius=0.1];
\draw [fill] (2,1) circle [radius=0.1];
\node [right] at (2,-1) {$0$};
\node [right] at (1.05,-0.45) {\textrm{-}$1$};
\node [right] at (0,0.1) {$1$};
\node [right] at (0,1) {$0$};
\node [right] at (1,1) {$0$};
\node [right] at (2,1) {$0$};
\node [right] at (1,2) {$0$};
\end{tikzpicture}}
}
\subfigure[$7_{\rm AM}$]
{{\scalefont{0.4}
\begin{tikzpicture}[scale=0.55]
\draw (1,0)--(0.2,1)--(0.92,1.93);
\draw (1,-1)--(1,1.9);
\draw (1,0)--(1.8,1)--(1.08,1.93);
\draw (0.2,1)--(0.2,1.9);
\draw (0.2,2) circle [radius=0.1];
\draw [fill] (0.2,1) circle [radius=0.1];
\draw [fill] (1,1) circle [radius=0.1];
\draw [fill] (1.8,1) circle [radius=0.1];
\draw [fill] (1,0) circle [radius=0.1];
\draw [fill] (1,2) circle [radius=0.1];
\draw [fill] (1,-1) circle [radius=0.1];
\node [right] at (1,0) {$0$};
\node [right] at (0.2,1) {\textrm{-}$1$};
\node [right] at (1,1) {$0$};
\node [right] at (1.8,1) {$0$};
\node [right] at (1,2) {$0$};
\node [right] at (1,-1) {$0$};
\node [right] at (0.2,2) {$1$};
\end{tikzpicture}}
}
\subfigure[$7_{\rm AN}$]
{{\scalefont{0.4}
\begin{tikzpicture}[scale=0.55]
\draw (1,0)--(0.2,1)--(0.92,1.93);
\draw (1,-1)--(1,1.9);
\draw (1,0)--(1.8,1)--(1.08,1.93);
\draw (1,2)--(1.9,2.295);
\draw (2,2.32) circle [radius=0.1];
\draw [fill] (0.2,1) circle [radius=0.1];
\draw [fill] (1,1) circle [radius=0.1];
\draw [fill] (1.8,1) circle [radius=0.1];
\draw [fill] (1,0) circle [radius=0.1];
\draw [fill] (1,2) circle [radius=0.1];
\draw [fill] (1,-1) circle [radius=0.1];
\node [right] at (1,0) {$0$};
\node [right] at (0.2,1) {$0$};
\node [right] at (1,1) {$0$};
\node [right] at (1.8,1) {$0$};
\node [right] at (1.05,1.95) {\textrm{-}$1$};
\node [right] at (1,-1) {$0$};
\node [right] at (2,2.32) {$1$};
\end{tikzpicture}}
}
\subfigure[$7_{\rm AO}$]
{{\scalefont{0.4}
\begin{tikzpicture}[scale=0.55]
\draw (1,-1)--(0.2,1)--(0.92,1.93);
\draw (1,-1)--(1,1.9);
\draw (1,-1)--(1.9,-0.705);
\draw (1,-1)--(1.8,1)--(1.08,1.93);
\draw (2,-0.68) circle [radius=0.1];
\draw [fill] (0.2,1) circle [radius=0.1];
\draw [fill] (1,1) circle [radius=0.1];
\draw [fill] (1.8,1) circle [radius=0.1];
\draw [fill] (1,0) circle [radius=0.1];
\draw [fill] (1,2) circle [radius=0.1];
\draw [fill] (1,-1) circle [radius=0.1];
\node [right] at (1,0.1) {$0$};
\node [right] at (0.2,1) {$0$};
\node [right] at (1,1) {$0$};
\node [right] at (1.8,1) {$0$};
\node [right] at (1,2) {$0$};
\node [right] at (1,-1.05) {\textrm{-}$1$};
\node [right] at (2,-0.65) {$1$};
\end{tikzpicture}}
}
\subfigure[$7_{\rm AP}$]
{{\scalefont{0.4}
\begin{tikzpicture}[scale=0.55]
\draw (1,-1)--(0.2,1)--(0.92,1.93);
\draw (1,-1)--(1,1.9);
\draw (1,-1)--(1.8,1)--(1.08,1.93);
\draw (0.2,1.9)--(0.2,1);
\draw (0.2,2) circle [radius=0.1];
\draw [fill] (0.2,1) circle [radius=0.1];
\draw [fill] (1,1) circle [radius=0.1];
\draw [fill] (1.8,1) circle [radius=0.1];
\draw [fill] (1,0) circle [radius=0.1];
\draw [fill] (1,2) circle [radius=0.1];
\draw [fill] (1,-1) circle [radius=0.1];
\node [right] at (1,0.1) {$0$};
\node [right] at (0.2,1) {\textrm{-}$1$};
\node [right] at (1,1) {$0$};
\node [right] at (1.8,1) {$0$};
\node [right] at (1,2) {$0$};
\node [right] at (1,-1) {$0$};
\node [right] at (0.2,2) {$1$};
\end{tikzpicture}}
}
\subfigure[$7_{\rm AQ}$]
{{\scalefont{0.4}
\begin{tikzpicture}[scale=0.55]
\draw (2,-1)--(0.08,-0.05);
\draw (2,-1)--(1,1)--(1,1.9);
\draw (2,-1)--(2,1)--(1.08,1.93);
\draw (1,-0.5)--(0,1)--(0.92,1.93);
\draw [fill] (2,-1) circle [radius=0.1];
\draw [fill] (1,-0.5) circle [radius=0.1];
\draw (0,0) circle [radius=0.1];
\draw [fill] (0,1) circle [radius=0.1];
\draw [fill] (1,1) circle [radius=0.1];
\draw [fill] (1,2) circle [radius=0.1];
\draw [fill] (2,1) circle [radius=0.1];
\node [right] at (2,-1) {$0$};
\node [right] at (1,-0.4) {\textrm{-}$1$};
\node [right] at (0,0.1) {$1$};
\node [right] at (0,1) {$0$};
\node [right] at (1,1) {$0$};
\node [right] at (2,1) {$0$};
\node [right] at (1,2) {$0$};
\end{tikzpicture}}
}
\subfigure[$7_{\rm AR}$]
{{\scalefont{0.4}
\begin{tikzpicture}[scale=0.55]
\draw (0,1)--(0,1.9);
\draw (1,-0.5)--(2,-1);
\draw (2,-1)--(1,1)--(1,1.9);
\draw (2,-1)--(2,1)--(1.08,1.93);
\draw (1,-0.5)--(0,1)--(0.92,1.93);
\draw [fill] (2,-1) circle [radius=0.1];
\draw [fill] (1,-0.5) circle [radius=0.1];
\draw (0,2) circle [radius=0.1];
\draw [fill] (0,1) circle [radius=0.1];
\draw [fill] (1,1) circle [radius=0.1];
\draw [fill] (1,2) circle [radius=0.1];
\draw [fill] (2,1) circle [radius=0.1];
\node [right] at (2,-1) {$0$};
\node [right] at (1,-0.4) {$0$};
\node [right] at (0,2) {$1$};
\node [right] at (0,1) {\textrm{-}$1$};
\node [right] at (1,1) {$0$};
\node [right] at (2,1) {$0$};
\node [right] at (1,2) {$0$};
\end{tikzpicture}}
}
\subfigure[$7_{\rm AS}$]
{{\scalefont{0.4}
\begin{tikzpicture}[scale=0.55]
\draw (1,-1)--(0.2,1)--(0.92,1.93);
\draw (1,-1)--(1,1.9);
\draw (1,-1)--(1.8,1)--(1.08,1.93);
\draw (1,2)--(1.9,2.295);
\draw (2,2.32) circle [radius=0.1];
\draw [fill] (0.2,1) circle [radius=0.1];
\draw [fill] (1,1) circle [radius=0.1];
\draw [fill] (1.8,1) circle [radius=0.1];
\draw [fill] (1,0) circle [radius=0.1];
\draw [fill] (1,2) circle [radius=0.1];
\draw [fill] (1,-1) circle [radius=0.1];
\node [right] at (1,0.1) {$0$};
\node [right] at (0.2,1) {$0$};
\node [right] at (1,1) {$0$};
\node [right] at (1.8,1) {$0$};
\node [right] at (1.05,1.95) {\textrm{-}$1$};
\node [right] at (1,-1) {$0$};
\node [right] at (2,2.32) {$1$};
\end{tikzpicture}}
}
\subfigure[$7_{\rm AT}$]
{{\scalefont{0.4}
\begin{tikzpicture}[scale=0.55]
\draw (2,-1)--(0,0)--(0,1)--(0.92,1.93);
\draw (2,-1)--(1,1)--(1,1.9);
\draw (2,-1)--(2,1)--(1.08,1.93);
\draw [fill] (2,-1) circle [radius=0.1];
\draw [fill] (1,-0.5) circle [radius=0.1];
\draw [fill] (0,0) circle [radius=0.1];
\draw [fill] (0,1) circle [radius=0.1];
\draw [fill] (1,1) circle [radius=0.1];
\draw (1,2) circle [radius=0.1];
\draw [fill] (2,1) circle [radius=0.1];
\node [right] at (2,-1) {$2$};
\node [right] at (1,-0.4) {$0$};
\node [right] at (0,0.1) {$0$};
\node [right] at (0,1) {\textrm{-}$1$};
\node [right] at (1,1) {\textrm{-}$1$};
\node [right] at (2,1) {\textrm{-}$1$};
\node [right] at (1,2) {$1$};
\end{tikzpicture}}
}
\subfigure[$7_{\rm AU}$]
{{\scalefont{0.4}
\begin{tikzpicture}[scale=0.55]
\draw (2,-1)--(0,0)--(0,1)--(0.92,1.93);
\draw (1,-0.5)--(1,1)--(1,1.9);
\draw (1,-0.5)--(2,1)--(1.08,1.93);
\draw [fill] (2,-1) circle [radius=0.1];
\draw [fill] (1,-0.5) circle [radius=0.1];
\draw [fill] (0,0) circle [radius=0.1];
\draw [fill] (0,1) circle [radius=0.1];
\draw [fill] (1,1) circle [radius=0.1];
\draw (1,2) circle [radius=0.1];
\draw [fill] (2,1) circle [radius=0.1];
\node [right] at (2,-1) {$0$};
\node [right] at (1.05,-0.45) {$2$};
\node [right] at (0,0.1) {$0$};
\node [right] at (0,1) {\textrm{-}$1$};
\node [right] at (1,1) {\textrm{-}$1$};
\node [right] at (2,1) {\textrm{-}$1$};
\node [right] at (1,2) {$1$};
\end{tikzpicture}}
}
\subfigure[$7_{\rm AV}$]
{{\scalefont{0.4}
\begin{tikzpicture}[scale=0.55]
\draw (2,-1)--(0,0)--(0,1)--(0.92,1.93);
\draw (0,0)--(1,1)--(1,1.9);
\draw (0,0)--(2,1)--(1.08,1.93);
\draw [fill] (2,-1) circle [radius=0.1];
\draw [fill] (1,-0.5) circle [radius=0.1];
\draw [fill] (0,0) circle [radius=0.1];
\draw [fill] (0,1) circle [radius=0.1];
\draw [fill] (1,1) circle [radius=0.1];
\draw (1,2) circle [radius=0.1];
\draw [fill] (2,1) circle [radius=0.1];
\node [right] at (2,-1) {$0$};
\node [right] at (1,-0.4) {$0$};
\node [right] at (0,0) {\ $2$};
\node [right] at (0,1) {\textrm{-}$1$};
\node [right] at (1,1) {\textrm{-}$1$};
\node [right] at (2,1) {\textrm{-}$1$};
\node [right] at (1,2) {$1$};
\end{tikzpicture}}
}
\subfigure[$7_{\rm AX}$]
{{\scalefont{0.4}
\begin{tikzpicture}[scale=0.55]
\draw (2,0)--(1,-0.5)--(0,0)--(0,1)--(0.92,1.93);
\draw (1,-0.5)--(1,1)--(1,1.9);
\draw (2,0)--(2,1)--(1.08,1.93);
\draw [fill] (2,0) circle [radius=0.1];
\draw [fill] (1,-0.5) circle [radius=0.1];
\draw [fill] (0,0) circle [radius=0.1];
\draw [fill] (0,1) circle [radius=0.1];
\draw [fill] (1,1) circle [radius=0.1];
\draw (1,2) circle [radius=0.1];
\draw [fill] (2,1) circle [radius=0.1];
\node [right] at (2,0) {$0$};
\node [right] at (1,-0.6) {$2$};
\node [right] at (0,0.1) {$0$};
\node [right] at (0,1) {\textrm{-}$1$};
\node [right] at (1,1) {\textrm{-}$1$};
\node [right] at (2,1) {\textrm{-}$1$};
\node [right] at (1,2) {$1$};
\end{tikzpicture}}
}
\centerline{\bf Figure 4.}
\end{figure}
\setcounter{subfigure}{0}
\begin{figure}[ht!]
% \pgfplotsset{compat=1.7}
\centering
\subfigure[$7_{\rm BA}$]
{{\scalefont{0.4}
\begin{tikzpicture}[scale=0.55]
\draw (1.92,-0.05)--(1,-0.5)--(0,0)--(0,1)--(0.92,1.93);
\draw (0,0)--(1,1)--(1,1.9);
\draw (1,-0.5)--(2,1)--(1.08,1.93);
\draw (2,0) circle [radius=0.1];
\draw [fill] (1,-0.5) circle [radius=0.1];
\draw [fill] (0,0) circle [radius=0.1];
\draw [fill] (0,1) circle [radius=0.1];
\draw [fill] (1,1) circle [radius=0.1];
\draw [fill] (1,2) circle [radius=0.1];
\draw [fill] (2,1) circle [radius=0.1];
\node [right] at (2,0) {$1$};
\node [right] at (1,-0.6) {\textrm{-}$1$};
\node [right] at (0.05,0.05) {$0$};
\node [right] at (0,1) {$0$};
\node [right] at (1,1) {$0$};
\node [right] at (2,1) {$0$};
\node [right] at (1,2) {$0$};
\end{tikzpicture}}
}
\subfigure[$7_{\rm BB}$]
{{\scalefont{0.4}
\begin{tikzpicture}[scale=0.55]
\draw (2,-1)--(0.08,-0.05);
\draw (1,-0.5)--(1,1)--(1,1.9);
\draw (2,-1)--(2,1)--(1.08,1.93);
\draw (1,-0.5)--(0,1)--(0.92,1.93);
\draw [fill] (2,-1) circle [radius=0.1];
\draw [fill] (1,-0.5) circle [radius=0.1];
\draw (0,0) circle [radius=0.1];
\draw [fill] (0,1) circle [radius=0.1];
\draw [fill] (1,1) circle [radius=0.1];
\draw [fill] (1,2) circle [radius=0.1];
\draw [fill] (2,1) circle [radius=0.1];
\node [right] at (2,-1) {$0$};
\node [right] at (1,-0.4) {\textrm{-}$1$};
\node [right] at (0,0.1) {$1$};
\node [right] at (0,1) {$0$};
\node [right] at (1,1) {$0$};
\node [right] at (2,1) {$0$};
\node [right] at (1,2) {$0$};
\end{tikzpicture}}
}
\subfigure[$7_{\rm BC}$]
{{\scalefont{0.4}
\begin{tikzpicture}[scale=0.55]
\draw (1,-1)--(0.2,1)--(0.92,1.93);
\draw (0.2,1.9)--(0.2,1);
\draw (1,-1)--(1,1.9);
\draw (1,0)--(1.8,1)--(1.08,1.93);
\draw [fill] (0.2,1) circle [radius=0.1];
\draw [fill] (1,1) circle [radius=0.1];
\draw [fill] (1.8,1) circle [radius=0.1];
\draw [fill] (1,0) circle [radius=0.1];
\draw [fill] (1,2) circle [radius=0.1];
\draw (0.2,2) circle [radius=0.1];
\draw [fill] (1,-1) circle [radius=0.1];
\node [right] at (1,0) {$0$};
\node [right] at (0.2,1) {\textrm{-}$1$};
\node [right] at (1,1) {$0$};
\node [right] at (1.8,1) {$0$};
\node [right] at (1,2) {$0$};
\node [right] at (1,-1) {$0$};
\node [right] at (0.2,2) {$1$};
\end{tikzpicture}}
}
\subfigure[$7_{\rm BD}$]
{{\scalefont{0.4}
\begin{tikzpicture}[scale=0.55]
\draw (1,-1)--(0.2,1)--(0.92,1.93);
\draw (1.8,1.9)--(1.8,1);
\draw (1,-1)--(1,1.9);
\draw (1,0)--(1.8,1)--(1.08,1.93);
\draw [fill] (0.2,1) circle [radius=0.1];
\draw [fill] (1,1) circle [radius=0.1];
\draw [fill] (1.8,1) circle [radius=0.1];
\draw [fill] (1,0) circle [radius=0.1];
\draw [fill] (1,2) circle [radius=0.1];
\draw (1.8,2) circle [radius=0.1];
\draw [fill] (1,-1) circle [radius=0.1];
\node [right] at (1,0) {$0$};
\node [right] at (1.8,1) {\textrm{-}$1$};
\node [right] at (1,1) {$0$};
\node [right] at (0.2,1) {$0$};
\node [right] at (1,2) {$0$};
\node [right] at (1,-1) {$0$};
\node [right] at (1.8,2) {$1$};
\end{tikzpicture}}
}
\subfigure[$7_{\rm BE}$]
{{\scalefont{0.4}
\begin{tikzpicture}[scale=0.55]
\draw (1,-1)--(0.2,1)--(0.92,1.93);
\draw (1,-1)--(1,1.9);
\draw (1,0)--(1.8,1)--(1.08,1.93);
\draw (1,0)--(1.8,1)--(1,2);
\draw (1,2)--(1.9,2.295);
\draw (2,2.32) circle [radius=0.1];
\draw [fill] (0.2,1) circle [radius=0.1];
\draw [fill] (1,1) circle [radius=0.1];
\draw [fill] (1.8,1) circle [radius=0.1];
\draw [fill] (1,0) circle [radius=0.1];
\draw [fill] (1,2) circle [radius=0.1];
\draw [fill] (1,-1) circle [radius=0.1];
\node [right] at (1,0) {$0$};
\node [right] at (0.2,1) {$0$};
\node [right] at (1,1) {$0$};
\node [right] at (1.8,1) {$0$};
\node [right] at (1.05,1.95) {\textrm{-}$1$};
\node [right] at (1,-1) {$0$};
\node [right] at (2,2.32) {$1$};
\end{tikzpicture}}
}
\subfigure[$7_{\rm BF}$]
{{\scalefont{0.4}
\begin{tikzpicture}[scale=0.55]
\draw (2,-1)--(0,0)--(0,1)--(0.92,1.93);
\draw (0,0)--(1,1)--(1,1.9);
\draw (2,-1)--(2,1)--(1.08,1.93);
\draw [fill] (2,-1) circle [radius=0.1];
\draw [fill] (1,-0.5) circle [radius=0.1];
\draw [fill] (0,0) circle [radius=0.1];
\draw [fill] (0,1) circle [radius=0.1];
\draw [fill] (1,1) circle [radius=0.1];
\draw (1,2) circle [radius=0.1];
\draw [fill] (2,1) circle [radius=0.1];
\node [right] at (2,-1) {$1$};
\node [right] at (1.05,-0.45) {$0$};
\node [right] at (0.05,0.05) {$1$};
\node [right] at (0,1) {\textrm{-}$1$};
\node [right] at (1,1) {\textrm{-}$1$};
\node [right] at (2,1) {\textrm{-}$1$};
\node [right] at (1,2) {$1$};
\end{tikzpicture}}
}
\subfigure[$7_{\rm BG}$]
{{\scalefont{0.4}
\begin{tikzpicture}[scale=0.55]
\draw (2,-1)--(0,0)--(0,1)--(0.92,1.93);
\draw (1,-0.5)--(1,1)--(1,1.9);
\draw (2,-1)--(2,1)--(1.08,1.93);
\draw [fill] (2,-1) circle [radius=0.1];
\draw [fill] (1,-0.5) circle [radius=0.1];
\draw [fill] (0,0) circle [radius=0.1];
\draw [fill] (0,1) circle [radius=0.1];
\draw [fill] (1,1) circle [radius=0.1];
\draw (1,2) circle [radius=0.1];
\draw [fill] (2,1) circle [radius=0.1];
\node [right] at (2,-1) {$1$};
\node [right] at (1,-0.4) {$1$};
\node [right] at (0,0.1) {$0$};
\node [right] at (0,1) {\textrm{-}$1$};
\node [right] at (1,1) {\textrm{-}$1$};
\node [right] at (2,1) {\textrm{-}$1$};
\node [right] at (1,2) {$1$};
\end{tikzpicture}}
}
\subfigure[$7_{\rm BH}$]
{{\scalefont{0.4}
\begin{tikzpicture}[scale=0.55]
\draw (2,-1)--(0,0)--(0,1)--(0.92,1.93);
\draw (0,0)--(1,1)--(1,1.9);
\draw (1,-0.5)--(2,1)--(1.08,1.93);
\draw [fill] (2,-1) circle [radius=0.1];
\draw [fill] (1,-0.5) circle [radius=0.1];
\draw [fill] (0,0) circle [radius=0.1];
\draw [fill] (0,1) circle [radius=0.1];
\draw [fill] (1,1) circle [radius=0.1];
\draw (1,2) circle [radius=0.1];
\draw [fill] (2,1) circle [radius=0.1];
\node [right] at (2,-1) {$0$};
\node [right] at (1.05,-0.45) {$1$};
\node [right] at (0.05,0.05) {$1$};
\node [right] at (0,1) {\textrm{-}$1$};
\node [right] at (1,1) {\textrm{-}$1$};
\node [right] at (2,1) {\textrm{-}$1$};
\node [right] at (1,2) {$1$};
\end{tikzpicture}}
}
\subfigure[$7_{\rm BI}$]
{{\scalefont{0.4}
\begin{tikzpicture}[scale=0.55]
\draw (2,0)--(1,-0.5)--(0,0)--(0,1)--(0.92,1.93);
\draw (0,0)--(1,1)--(1,1.9);
\draw (2,0)--(2,1)--(1.08,1.93);
\draw [fill] (2,0) circle [radius=0.1];
\draw [fill] (1,-0.5) circle [radius=0.1];
\draw [fill] (0,0) circle [radius=0.1];
\draw [fill] (0,1) circle [radius=0.1];
\draw [fill] (1,1) circle [radius=0.1];
\draw (1,2) circle [radius=0.1];
\draw [fill] (2,1) circle [radius=0.1];
\node [right] at (2,0) {$0$};
\node [right] at (1,-0.6) {$1$};
\node [right] at (0.05,0.05) {$1$};
\node [right] at (0,1) {\textrm{-}$1$};
\node [right] at (1,1) {\textrm{-}$1$};
\node [right] at (2,1) {\textrm{-}$1$};
\node [right] at (1,2) {$1$};
\end{tikzpicture}}
}
\centerline{\bf Figure 5.}
\end{figure}
\setcounter{subfigure}{0}
\begin{figure}[ht!]
% \pgfplotsset{compat=1.7}
\centering
\subfigure[$7_{\rm CA}$]
{{\scalefont{0.4}
\begin{tikzpicture}[scale=0.55]
\draw (1,0)--(0.2,1)--(1,2);
\draw (1,0)--(1,2);
\draw (0.2,1)--(0.2,1.9);
\draw (0.2,2)--(0.56,2.92);
\draw (1,2)--(0.64,2.92);
\draw (1,0)--(1.8,1)--(1,2);
\draw [fill] (0.2,1) circle [radius=0.1];
\draw [fill] (1,1) circle [radius=0.1];
\draw [fill] (1.8,1) circle [radius=0.1];
\draw [fill] (1,0) circle [radius=0.1];
\draw [fill] (1,2) circle [radius=0.1];
\draw [fill] (0.2,2) circle [radius=0.1];
\draw (0.6,3) circle [radius=0.1];
\node [right] at (0.6,3) {$1$};
\node [right] at (1,0) {$0$};
\node [right] at (1,2) {\textrm{-}$1$};
\node [right] at (1,1) {$0$};
\node [right] at (1.8,1) {$0$};
\node [right] at (0.2,1) {$1$};
\node [right] at (0.2,2) {\textrm{-}$1$};
\end{tikzpicture}}
}
\subfigure[$7_{\rm CB}$]
{{\scalefont{0.4}
\begin{tikzpicture}[scale=0.55]
\draw (1,0)--(0.3,1)--(1.2,2);
\draw (1,0)--(1,1)--(1.2,2);
\draw (1,0)--(1.8,1)--(1.2,2);
\draw (1,0)--(2.6,1)--(2.32,2.2);
\draw (2.22,2.25)--(1.2,2);
\draw (2.3,2.3) circle [radius=0.1];
\draw [fill] (1,0) circle [radius=0.1];
\draw [fill] (0.3,1) circle [radius=0.1];
\draw [fill] (1,1) circle [radius=0.1];
\draw [fill] (1.8,1) circle [radius=0.1];
\draw [fill] (2.6,1) circle [radius=0.1];
\draw [fill] (1.2,2) circle [radius=0.1];
\draw [fill] (1.8,1) circle [radius=0.1];
\node [right] at (2.3,2.3) {$1$};
\node [right] at (1,0) {$1$};
\node [right] at (0.3,1) {$0$};
\node [right] at (1.8,1) {$0$};
\node [right] at (2.6,1) {\textrm{-}$1$};
\node [right] at (1,1) {$0$};
\node [right] at (1.2,1.9) {\textrm{-}$1$};
\end{tikzpicture}}
}
\subfigure[$7_{\rm CC}$]
{{\scalefont{0.4}
\begin{tikzpicture}[scale=0.55]
\draw (1.5,0)--(0.5,1)--(1.5,2);
\draw (1.5,0)--(1.2,1)--(1.5,2);
\draw (1.5,0)--(2,1)--(2.36,1.92);
\draw (1.5,0)--(2.8,1);
\draw (2.8,1)--(2.44,1.92);
\draw (2,1)--(1.5,2);
\draw (2.4,2) circle [radius=0.1];
\draw [fill] (1.5,0) circle [radius=0.1];
\draw [fill] (0.5,1) circle [radius=0.1];
\draw [fill] (1.2,1) circle [radius=0.1];
\draw [fill] (2,1) circle [radius=0.1];
\draw [fill] (2.8,1) circle [radius=0.1];
\draw [fill] (1.5,2) circle [radius=0.1];
\draw [fill] (2,1) circle [radius=0.1];
\node [right] at (2.4,2) {$1$};
\node [right] at (1.5,-0.05) {$1$};
\node [right] at (0.5,1) {$0$};
\node [right] at (2,1) {\textrm{-}$1$};
\node [right] at (2.8,1) {\textrm{-}$1$};
\node [right] at (1.2,1) {$0$};
\node [right] at (1.5,2) {$0$};
\end{tikzpicture}}
}
\subfigure[$7_{\rm CD}$]
{{\scalefont{0.4}
\begin{tikzpicture}[scale=0.55]
\draw (1,0)--(0.25,1)--(1,2)--(1.65,2.39);
\draw (1,0)--(1.75,1)--(1,2);
\draw (1,0)--(2.5,1)--(1.74,2.3);
\draw (1,0)--(3.25,1)--(1.79,2.32);
\draw [fill] (0.25,1) circle [radius=0.1];
\draw [fill] (1.75,1) circle [radius=0.1];
\draw [fill] (1,0) circle [radius=0.1];
\draw [fill] (1,2) circle [radius=0.1];
\draw (1.75,2.4) circle [radius=0.1];
\draw [fill] (2.5,1) circle [radius=0.1];
\draw [fill] (3.25,1) circle [radius=0.1];
\node [right] at (0.25,1.05) {$0$};
\node [right] at (1.75,1.05) {$0$};
\node [right] at (1,1.95) {\textrm{-}$1$};
\node [right] at (1.75,2.4) {$1$};
\node [right] at (2.5,1.05) {\textrm{-}$1$};
\node [right] at (3.25,1.05) {\textrm{-}$1$};
\node [right] at (1,-0.1) {$2$};
\end{tikzpicture}}
}
\subfigure[$7_{\rm CE}$]
{{\scalefont{0.4}
\begin{tikzpicture}[scale=0.55]
\draw (1,0)--(0.25,1)--(1,2)--(1.68,2.93);
\draw (1,0)--(1.75,1)--(1,2);
\draw (1.75,1)--(1.75,2.9);
\draw (1.75,1)--(2.5,2)--(1.82,2.93);
\draw [fill] (0.25,1) circle [radius=0.1];
\draw [fill] (1.75,1) circle [radius=0.1];
\draw [fill] (1,0) circle [radius=0.1];
\draw [fill] (1.75,2) circle [radius=0.1];
\draw [fill] (2.5,2) circle [radius=0.1];
\draw (1.75,3) circle [radius=0.1];
\draw [fill] (1,2) circle [radius=0.1];
\node [right] at (1,0) {$0$};
\node [right] at (0.25,1) {$0$};
\node [right] at (1.75,1) {$2$};
\node [right] at (1,2) {\textrm{-}$1$};
\node [right] at (1.75,2) {\textrm{-}$1$};
\node [right] at (2.5,2) {\textrm{-}$1$};
\node [right] at (1.75,3) {$1$};
\end{tikzpicture}}
}
\centerline{\bf Figure 6.}
\end{figure}
\begin{figure}[ht!]
% \pgfplotsset{compat=1.7}
\centering
\subfigure[$7_{\rm DA}$]
{{\scalefont{0.4}
\begin{tikzpicture}[scale=0.55]
\draw (2.5,-0.5)--(1.59,-0.22);
\draw (2.5,-0.5)--(0.5,1)--(1.43,1.93);
\draw (2.5,-0.5)--(1.2,1)--(1.43,1.93);
\draw (2.5,-0.5)--(2,1)--(1.57,1.93);
\draw (2.5,-0.5)--(2.8,1)--(1.57,1.93);
\draw [fill] (2.5,-0.5) circle [radius=0.1];
\draw (1.5,-0.2) circle [radius=0.1];
\draw [fill] (0.5,1) circle [radius=0.1];
\draw [fill] (1.2,1) circle [radius=0.1];
\draw [fill] (2,1) circle [radius=0.1];
\draw [fill] (2.8,1) circle [radius=0.1];
\draw [fill] (1.5,2) circle [radius=0.1];
\draw [fill] (2,1) circle [radius=0.1];
\node [right] at (2.5,-0.5) {\textrm{-}$1$};
\node [right] at (1.44,-0.45) {$1$};
\node [right] at (0.5,1) {$0$};
\node [right] at (2,1) {$0$};
\node [right] at (2.8,1) {$0$};
\node [right] at (1.2,1) {$0$};
\node [right] at (1.5,2) {$0$};
\end{tikzpicture}}
}
\subfigure[$7_{\rm DB}$]
{{\scalefont{0.4}
\begin{tikzpicture}[scale=0.55]
\draw (1.5,0)--(0.5,1)--(1.5,2);
\draw (1.5,0)--(1.2,1)--(1.43,1.93);
\draw (1.5,0)--(2,1)--(1.57,1.93);
\draw (1.5,0)--(2.8,1)--(1.57,1.93);
\draw (2.8,1)--(2.8,1.9);
\draw (2.8,2) circle [radius=0.1];
\draw [fill] (1.5,0) circle [radius=0.1];
\draw [fill] (0.5,1) circle [radius=0.1];
\draw [fill] (1.2,1) circle [radius=0.1];
\draw [fill] (2,1) circle [radius=0.1];
\draw [fill] (2.8,1) circle [radius=0.1];
\draw [fill] (1.5,2) circle [radius=0.1];
\draw [fill] (2,1) circle [radius=0.1];
\node [right] at (2.8,2) {$1$};
\node [right] at (1.5,0.02) {\ $0$};
\node [right] at (0.5,1) {$0$};
\node [right] at (2,1) {$0$};
\node [right] at (2.8,1) {\textrm{-}$1$};
\node [right] at (1.2,1) {$0$};
\node [right] at (1.5,2) {$0$};
\end{tikzpicture}}
}
\subfigure[$7_{\rm DC}$]
{{\scalefont{0.4}
\begin{tikzpicture}[scale=0.55]
\draw (1.5,0)--(0.5,1)--(1.5,2)--(2.43,2.33);
\draw (1.5,0)--(1.2,1)--(1.43,1.93);
\draw (1.5,0)--(2,1)--(1.57,1.93);
\draw (1.5,0)--(2.8,1)--(1.57,1.93);
\draw (2.5,2.4) circle [radius=0.1];
\draw [fill] (1.5,0) circle [radius=0.1];
\draw [fill] (0.5,1) circle [radius=0.1];
\draw [fill] (1.2,1) circle [radius=0.1];
\draw [fill] (2,1) circle [radius=0.1];
\draw [fill] (2.8,1) circle [radius=0.1];
\draw [fill] (1.5,2) circle [radius=0.1];
\draw [fill] (2,1) circle [radius=0.1];
\node [right] at (2.5,2.4) {$1$};
\node [right] at (1.5,0.02) {\ $0$};
\node [right] at (0.5,1) {$0$};
\node [right] at (2,1) {$0$};
\node [right] at (2.8,1) {$0$};
\node [right] at (1.2,1) {$0$};
\node [right] at (1.6,1.95) {\textrm{-}$1$};
\end{tikzpicture}}
}
\subfigure[$7_{\rm DD}$]
{{\scalefont{0.4}
\begin{tikzpicture}[scale=0.55]
\draw (2.5,-0.5)--(1.5,0)--(0.5,1)--(1.43,1.93);
\draw (1.5,0)--(1.2,1)--(1.43,1.93);
\draw (1.5,0)--(2,1)--(1.57,1.93);
\draw (1.5,0)--(2.8,1)--(1.57,1.93);
\draw [fill] (2.5,-0.5) circle [radius=0.1];
\draw [fill] (1.5,0) circle [radius=0.1];
\draw [fill] (0.5,1) circle [radius=0.1];
\draw [fill] (1.2,1) circle [radius=0.1];
\draw [fill] (2,1) circle [radius=0.1];
\draw [fill] (2.8,1) circle [radius=0.1];
\draw (1.5,2) circle [radius=0.1];
\draw [fill] (2,1) circle [radius=0.1];
\node [right] at (2.5,-0.5) {$0$};
\node [right] at (1.5,0.02) {\ $3$};
\node [right] at (0.5,1) {\textrm{-}$1$};
\node [right] at (2,1) {\textrm{-}$1$};
\node [right] at (2.8,1) {\textrm{-}$1$};
\node [right] at (1.2,1) {\textrm{-}$1$};
\node [right] at (1.5,2) {$1$};
\end{tikzpicture}}
}
\subfigure[$7_{\rm DE}$]
{{\scalefont{0.4}
\begin{tikzpicture}[scale=0.55]
\draw (2.5,-0.5)--(1.5,0)--(0.5,1)--(1.43,1.93);
\draw (2.5,-0.5)--(1.2,1)--(1.43,1.93);
\draw (2.5,-0.5)--(2,1)--(1.57,1.93);
\draw (2.5,-0.5)--(2.8,1)--(1.57,1.93);
\draw [fill] (2.5,-0.5) circle [radius=0.1];
\draw [fill] (1.5,0) circle [radius=0.1];
\draw [fill] (0.5,1) circle [radius=0.1];
\draw [fill] (1.2,1) circle [radius=0.1];
\draw [fill] (2,1) circle [radius=0.1];
\draw [fill] (2.8,1) circle [radius=0.1];
\draw (1.5,2) circle [radius=0.1];
\draw [fill] (2,1) circle [radius=0.1];
\node [right] at (2.5,-0.5) {$3$};
\node [right] at (1.5,0.05) {$0$};
\node [right] at (0.5,1) {\textrm{-}$1$};
\node [right] at (2,1) {\textrm{-}$1$};
\node [right] at (2.8,1) {\textrm{-}$1$};
\node [right] at (1.2,1) {\textrm{-}$1$};
\node [right] at (1.5,2) {$1$};
\end{tikzpicture}}
}
\centerline{\bf Figure 7.}
\end{figure}
\begin{figure}[ht!]
% \pgfplotsset{compat=1.7}
\centering
\setcounter{subfigure}{0}
\subfigure[$7_{\rm E}$]
{{\scalefont{0.4}
\begin{tikzpicture}[scale=0.55]
\draw (1,0)--(0.25,1)--(0.92,1.92);
\draw (1,0)--(1,-0.9);
\draw (1,0)--(1.75,1)--(1.08,1.92);
\draw (1,-1)--(1.75,0)--(1.75,1);
\draw (1,-1)--(-0.5,1)--(0.9,1.95);
\draw [fill] (-0.5,1) circle [radius=0.1];
\draw [fill] (0.25,1) circle [radius=0.1];
\draw [fill] (1.75,1) circle [radius=0.1];
\draw [fill] (1,0) circle [radius=0.1];
\draw (1,2) circle [radius=0.1];
\draw [fill] (1.75,0) circle [radius=0.1];
\draw [fill] (1,-1) circle [radius=0.1];
\node [right] at (1,0) {$1$};
\node [right] at (0.25,1) {\textrm{-}$1$};
\node [right] at (1.75,1) {\textrm{-}$1$};
\node [right] at (1,2) {$1$};
\node [right] at (1,-1) {$1$};
\node [right] at (-0.5,1) {\textrm{-}$1$};
\node [right] at (1.75,0) {$0$};
\end{tikzpicture}}
}
\subfigure[$7_{\rm F}$]
{{\scalefont{0.4}
\begin{tikzpicture}[scale=0.55]
\draw (2,0)--(1,-0.5)--(0,0)--(0,1)--(0.92,1.93);
\draw (2,0)--(1,1);
\draw (0,0)--(1,1)--(1,1.9);
\draw (2,0)--(2,1)--(1.08,1.93);
\draw [fill] (2,0) circle [radius=0.1];
\draw [fill] (1,-0.5) circle [radius=0.1];
\draw [fill] (0,0) circle [radius=0.1];
\draw [fill] (0,1) circle [radius=0.1];
\draw [fill] (1,1) circle [radius=0.1];
\draw (1,2) circle [radius=0.1];
\draw [fill] (2,1) circle [radius=0.1];
\node [right] at (2,0) {$1$};
\node [right] at (1,-0.6) {$0$};
\node [right] at (0.05,0.05) {$1$};
\node [right] at (0,1) {\textrm{-}$1$};
\node [right] at (1,1) {\textrm{-}$1$};
\node [right] at (2,1) {\textrm{-}$1$};
\node [right] at (1,2) {$1$};
\end{tikzpicture}}
}
\subfigure[$7_{\rm G}$]
{{\scalefont{0.4}
\begin{tikzpicture}[scale=0.55]
\draw (2.3,-0.5)--(1,0)--(0.3,1)--(1.43,1.93);
\draw (1,0)--(1,1)--(1.43,1.93);
\draw (1,0)--(1.8,1)--(1.57,1.93);
\draw (2.3,-0.5)--(2.6,1)--(1.57,1.93);
\draw [fill] (2.3,-0.5) circle [radius=0.1];
\draw [fill] (1,0) circle [radius=0.1];
\draw [fill] (0.3,1) circle [radius=0.1];
\draw [fill] (1,1) circle [radius=0.1];
\draw [fill] (1.8,1) circle [radius=0.1];
\draw [fill] (2.6,1) circle [radius=0.1];
\draw (1.5,2) circle [radius=0.1];
\draw [fill] (1.8,1) circle [radius=0.1];
\node [right] at (2.3,-0.5) {$1$};
\node [right] at (0.9,-0.3) {$2$};
\node [right] at (0.3,1) {\textrm{-}$1$};
\node [right] at (1.8,1) {\textrm{-}$1$};
\node [right] at (2.6,1) {\textrm{-}$1$};
\node [right] at (1,1) {\textrm{-}$1$};
\node [right] at (1.5,2) {$1$};
\end{tikzpicture}}
}
\subfigure[$7_{\rm H}$]
{{\scalefont{0.4}
\begin{tikzpicture}[scale=0.55]
\draw (2.5,-0.5)--(1,0)--(0.3,1)--(1.43,1.93);
\draw (1,0)--(1,1)--(1.43,1.93);
\draw (2.5,-0.5)--(2,1)--(1.57,1.93);
\draw (2.5,-0.5)--(2.8,1)--(1.57,1.93);
\draw [fill] (2.5,-0.5) circle [radius=0.1];
\draw [fill] (1,0) circle [radius=0.1];
\draw [fill] (0.3,1) circle [radius=0.1];
\draw [fill] (1,1) circle [radius=0.1];
\draw [fill] (2,1) circle [radius=0.1];
\draw [fill] (2.8,1) circle [radius=0.1];
\draw (1.5,2) circle [radius=0.1];
\draw [fill] (2,1) circle [radius=0.1];
\node [right] at (2.5,-0.5) {$2$};
\node [right] at (0.9,-0.3) {$1$};
\node [right] at (0.3,1) {\textrm{-}$1$};
\node [right] at (2,1) {\textrm{-}$1$};
\node [right] at (2.8,1) {\textrm{-}$1$};
\node [right] at (1,1) {\textrm{-}$1$};
\node [right] at (1.5,2) {$1$};
\end{tikzpicture}}
}
\subfigure[$7_{\rm I}$]
{{\scalefont{0.4}
\begin{tikzpicture}[scale=0.55]
\draw (1,0)--(-0.6,1)--(0.92,1.92);
\draw (1,0)--(0.2,1)--(0.92,1.92);
\draw (1,0)--(1,1.9);
\draw (1,0)--(1.8,1)--(1.08,1.92);
\draw (1,0)--(2.6,1)--(1.08,1.92);
\draw [fill] (0.2,1) circle [radius=0.1];
\draw [fill] (1,1) circle [radius=0.1];
\draw [fill] (1.8,1) circle [radius=0.1];
\draw [fill] (1,0) circle [radius=0.1];
\draw [fill] (2.6,1) circle [radius=0.1];
\draw [fill] (-0.6,1) circle [radius=0.1];
\draw (1,2) circle [radius=0.1];
\node [right] at (1,-0.1) {$4$};
\node [right] at (0.2,1) {\textrm{-}$1$};
\node [right] at (1,1) {\textrm{-}$1$};
\node [right] at (-0.6,1) {\textrm{-}$1$};
\node [right] at (2.6,1) {\textrm{-}$1$};
\node [right] at (1.8,1) {\textrm{-}$1$};
\node [right] at (1,2) {$1$};
\end{tikzpicture}}
}
\centerline{\bf Figure 8.}
\end{figure}

\begin{proof}
(i) This case is trivial, since if $S$ can be constructed by (M$_{1,i}$) and (M$_{2,i}$) only, then $f\in{\cal F}_2$ is a sufficient condition for the invertibility of $[S]_f$. (ii) Let $S\in 7_{\rm AA},7_{\rm AB},\ldots,7_{\rm AX}$. Then $S$ can be constructed by applying (M$_{1,i}$) six times and (M$_{3,i}$) once. Due to the zeros of the M\"obius function, the condition $f\in{\cal G}_{3, 5}$ guarantees the invertibility of $[S_i]_f$ when (M$_{3,i}$) is applied. Everytime when (M$_{1,i}$) is applied the invertibility follows from the condition $f\in{\cal F}_1$. (iii) The situation is  similar to the cases $S\in 7_{\rm BA},7_{\rm BB},\ldots,7_{\rm BI}$. The only difference is that the assumption $f\in {\cal G}_{3, 6}$ implies the invertibility of $[S_i]_f$ when (M$_{3,i}$) is used. (iv) In the cases $S\in 7_{\rm CA},7_{\rm CB},7_{\rm CC},7_{\rm CD},7_{\rm CE}$ the methods (M$_{1,i}$), (M$_{2,i}$) and (M$_{3,i}$) are all needed in the construction of the set $S$. In order to the matrix $[S]_f$ to be invertible, these methods require the assumptions $f\in{\cal F}_1$, $f\in{\cal F}_2$ and $f\in{\cal G}_{3, 5}$, respectively. (v) If $S\in 7_{\rm DA},7_{\rm DB},7_{\rm DC},7_{\rm DD},7_{\rm DE}$, then (M$_{1,i}$) and (M$_{4,i}$) are the only used methods. Here $f\in {\cal G}_{4, 6}$ assures the invertibility of $[S_i]_f$ when (M$_{4,i}$) is applied, otherwise the invertibility of $[S_i]_f$ follows from the condition $f\in {\cal F}_1$. (vi) The case $S\in 7_{\rm E}$ has much recemblance to the case (iv); here we just need the condition $f\in {\cal G}_{3, 6}$ instead of $f\in {\cal G}_{3, 5}$ when the method (M$_{3,i}$) is used. (viii)-(ix) In the cases $S\in 7_{\rm G}$ and $S\in 7_{\rm H}$ the set $S_{n-1}$ can be constructed by (M$_{1,i}$) only and $S$ can be constructed by (M$_{4,7}$) from $S_{n-1}$. Therefore in both cases the assumption $f\in{\cal F}_1$ guarantees that the matrix $[S_{n-1}]_f$ is invertible, whereas either condition $f\in {\cal G}_{4, 7}^{(1)}$ or $f\in{\cal G}_{4, 7}^{(2)}$ is needed to assure the invertibility of $[S]_f$ when the last element is added. (x) The case $S\in 7_{\rm I}$ is similar, here only the method (M$_{5,7}$) is used instead of (M$_{4,7}$) and the condition $f\in {\cal G}_{5, 7}$ is needed instead of assuming $f\in {\cal G}_{4, 7}^{(1)}$ or $f\in{\cal G}_{4, 7}^{(2)}$. This last result also follows from Theorem \ref{th:3.4}.
\end{proof}

\begin{corollary}\label{cor:4.5}
If $S$ is a meet closed set with $7$ elements and $f\in{\cal F}_2\cap {\cal G}_{3, 5}\cap {\cal G}_{3, 6}\cap {\cal G}_{4, 6}\cap {\cal G}_{4, 7}^{(1)}\cap {\cal G}_{4, 7}^{(2)}\cap {\cal G}_{5, 7}$, then $[S]_f$ is invertible. In particular, if $S$ is a gcd-closed set with $7$ elements, then $[S]$ is invertible.
\end{corollary}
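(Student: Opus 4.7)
The plan is to split Corollary \ref{cor:4.5} into its general lattice-theoretic statement and its number-theoretic specialization. The general statement is a direct bookkeeping consequence of Theorem \ref{th:4.5}: since $\mathcal{F}_2\subseteq\mathcal{F}_1$, the displayed intersection $\mathcal{F}_2\cap\mathcal{G}_{3,5}\cap\mathcal{G}_{3,6}\cap\mathcal{G}_{4,6}\cap\mathcal{G}_{4,7}^{(1)}\cap\mathcal{G}_{4,7}^{(2)}\cap\mathcal{G}_{5,7}$ is contained in the hypothesis on $f$ in every one of parts (i)--(x) of that theorem, and so for each combinatorial type of a seven-element meet closed set $S$ the join matrix $[S]_f$ is invertible.

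For the number-theoretic assertion it suffices to verify that $N$ belongs to the displayed intersection. Remark \ref{rem:B-L} and Corollary \ref{cor:4.4} already record $N\in\mathcal{F}_2\cap\mathcal{G}_{3,5}\cap\mathcal{G}_{3,6}\cap\mathcal{G}_{4,6}\cap\mathcal{G}_{5,7}$, so only the new memberships $N\in\mathcal{G}_{4,7}^{(1)}$ and $N\in\mathcal{G}_{4,7}^{(2)}$ remain; both will be proved by a direct evaluation of $(\tfrac{1}{N}*_S\mu_S)(x_7)$ on a generic gcd-closed $S$ of the appropriate shape.

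For $S\in\mathcal{S}_{4,7}^{(1)}$, the gcd-closedness and the Hasse diagram of Figure 1(h) force a parameterization $x_2=ax_1$, $x_3=abx_1$, $x_4=acx_1$, $x_5=adx_1$, $x_6=ex_1$ and $x_7=t\cdot abcde\cdot x_1$, where $a\geq 2$, $t\geq 1$, the integers $b,c,d\geq 2$ are pairwise coprime, and $e\geq 2$ is coprime to $abcd$. Reading off the Möbius coefficients $(1,2,-1,-1,-1,-1,1)$ from the figure, the quantity $x_1\cdot(\tfrac{1}{N}*_S\mu_S)(x_7)$ becomes
\[
1+\frac{2}{a}-\frac{1}{ab}-\frac{1}{ac}-\frac{1}{ad}-\frac{1}{e}+\frac{1}{tabcde},
\]
which is at least $\tfrac12+\tfrac{29}{30a}>0$ after applying $1/b+1/c+1/d\leq 31/30$ (extremum at $\{b,c,d\}=\{2,3,5\}$) and $1/e\leq 1/2$. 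The case $S\in\mathcal{S}_{4,7}^{(2)}$ is entirely parallel, with $x_5=dx_1$, $x_6=ex_1$ (so $d,e\geq 2$ are coprime), Möbius coefficients $(2,1,-1,-1,-1,-1,1)$ from Figure 1(i), and the bounds $1/b+1/c\leq 5/6$, $1/d+1/e\leq 5/6$ giving the analogous positive lower bound $\tfrac76+\tfrac{1}{6a}>0$. The only real obstacle is correctly extracting the coprimality constraints from the two Hasse diagrams of $\mathcal{S}_{4,7}^{(1)}$ and $\mathcal{S}_{4,7}^{(2)}$; once the parameterization is in hand, the required inequalities reduce to a few lines of routine arithmetic.
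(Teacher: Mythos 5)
There is a genuine gap: you have lost the class ${\cal G}_{3,7}$, i.e.\ the case $S\in 7_{\rm F}={\cal S}_{3,7}$. Your opening claim that the displayed intersection ``is contained in the hypothesis on $f$ in every one of parts (i)--(x)'' of Theorem \ref{th:4.5} is false for part (vii), which requires $f\in{\cal F}_2\cap{\cal G}_{3,7}$; the class ${\cal G}_{3,7}$ does not appear in the intersection and no containment of the form ${\cal F}_2\cap{\cal G}_{3,6}\subseteq{\cal G}_{3,7}$ (or similar) is available --- the structures ${\cal S}_{3,6}$ and ${\cal S}_{3,7}$ of Figure 1 are genuinely different, with different patterns of M\"obius values. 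Consequently your reduction of the number-theoretic part to ``only the new memberships $N\in{\cal G}_{4,7}^{(1)}$ and $N\in{\cal G}_{4,7}^{(2)}$'' is incomplete: gcd-closed sets of type $7_{\rm F}$ exist in $({\mathbb Z}_+,\mid)$, and for them you must separately verify $N\in{\cal G}_{3,7}$. The paper does exactly this with its own parameterization $x_2=ax_1$, $x_3=bx_1$, $x_4=acx_1$, $x_5=abdx_1$, $x_6=bex_1$ and the inequality \eqref{eq:4.11}, which uses the coprimality-forced fact $(bd-1)(c-1)-1>0$; without an argument of this kind your proof of the ``in particular'' statement does not cover all $222$ isomorphism types of seven-element meet semilattices.

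Apart from this omission, your treatment of ${\cal G}_{4,7}^{(1)}$ and ${\cal G}_{4,7}^{(2)}$ is essentially the paper's argument in different variables: your parameterizations encode the same coprimality constraints as the paper's (the paper writes $x_4=acx_1$, $x_5=adx_1$, $x_6=aex_1$ where you write $x_3=abx_1$, $x_4=acx_1$, $x_5=adx_1$), and your bounds $1/b+1/c+1/d\le 31/30$ and $1/b+1/c\le 5/6$, $1/d+1/e\le 5/6$ do yield strictly positive lower bounds, whereas the paper instead rearranges the numerator into a sum of nonnegative terms such as $bc(de-d-e)+bde(c-1)+acde(b-1)$. Either route is fine; but you must add the ${\cal G}_{3,7}$ computation (and, for the lattice-theoretic part, either add ${\cal G}_{3,7}$ to the hypothesis or explain why part (vii) of Theorem \ref{th:4.5} is nonetheless covered) before the corollary is proved.
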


\begin{proof}
The first part of this corollary is obvious, since ${\cal F}_1 \supseteq {\cal F}_2$ and the sets in parts (iii), (v) and (vi) respectively belong to classes ${\cal S}_{3, 7}$, ${\cal S}_{4, 7}^{(1)}$ and ${\cal S}_{4, 7}^{(2)}$. We prove the second part of this corollary. Since by Remark \ref{rem:B-L}, Corollary \ref{cor:3.2} and Corollary \ref{cor:4.4} $N\in{\cal F}_2\cap {\cal G}_{3, 5}\cap{\cal G}_{3, 6}\cap {\cal G}_{4, 6}\cap {\cal G}_{5, 7}$, it suffices to prove that $N\in {\cal G}_{3, 7}\cap {\cal G}_{4, 7}^{(1)}\cap {\cal G}_{4, 7}^{(2)}$. We prove first that $N\in {\cal G}_{3, 7}$ $(S\in 7_{\rm F})$. Let $x_1=\hbox{gcd}(x_2,x_3)=\hbox{gcd}(x_3,x_4)=\hbox{gcd}(x_2,x_6)=\hbox{gcd}(x_4,x_6)$, $x_2=\hbox{gcd}(x_4,x_5)$, $x_3=\hbox{gcd}(x_5,x_6)$ and $\hbox{lcm}(x_4,x_5,x_6)\mid x_7$. Thus $x_2=ax_1$, $x_3=bx_1$, $x_4=acx_1$, $x_5=abdx_1$, $x_6=bex_1$, where $a,b,c,e\geq2$ and $d\geq1$. Since $\gcd(c,bd)=1$, either $c\geq3$ or $b,d\geq3$ and we have $(bd-1)(c-1)-1>0$. In addition, $e-1>0$ and $x_1,x_7>0$ and thus we obtain
\begin{eqnarray}\label{eq:4.11}
&&\frac{1}{x_7}-\frac{1}{x_6}-\frac{1}{x_5}-\frac{1}{x_4}+\frac{1}{x_3}+\frac{1}{x_2}= \\
&&\qquad\qquad=\;\frac{1}{x_7}+\frac{-acd-ce-bde+acde+bcde}{abcdex_1} \nonumber \\
&&\qquad\qquad=\;\frac{1}{x_7}+\frac{acd(e-1)+e[(bd-1)(c-1)-1]}{abcdex_1}>0.\nonumber
\end{eqnarray}
Thus $N\in {\cal G}_{3, 7}$.

We prove second that $N\in {\cal G}_{4, 7}^{(1)}$ $(S\in 7_{\rm G})$. Let $x_1=\hbox{gcd}(x_2,x_3)=\hbox{gcd}(x_4,x_3)$ $=\hbox{gcd}(x_5,x_3)=\hbox{gcd}(x_6,x_3)$, $x_2=\hbox{gcd}(x_4,x_5)=\hbox{gcd}(x_4,x_6)=\hbox{gcd}(x_5,x_6)$ and $\hbox{lcm}(x_3,x_4,x_5,x_6)\mid x_7$. Thus $x_2=ax_1$, $x_3=bx_1$, $x_4=acx_1$, $x_5=adx_1$, $x_6=aex_1$, where $a,b,c,d,e\geq2$. Here $\gcd(d,e)=1$, which implies that $d\neq e$ and either $d>2$ or $e>2$. Therefore $de-d-e>0$, and since also $b-1,c-1>0$ and $x_1,x_7>0$, we have
\begin{eqnarray}\label{eq:4.12}
&&\frac{1}{x_7}-\frac{1}{x_6}-\frac{1}{x_5}-\frac{1}{x_4}-\frac{1}{x_3}+\frac{2}{x_2}+\frac{1}{x_1}= \\
&&\qquad\qquad=\;\frac{1}{x_7}+\frac{-bcd-bce-bde-acde+2bcde+abcde}{abcdex_1} \nonumber \\
&&\qquad\qquad=\;\frac{1}{x_7}+\frac{bc(de-d-e)+bde(c-1)+acde(b-1)}{abcdex_1}>0.\nonumber
\end{eqnarray}
Thus $N\in {\cal G}_{4, 7}^{(1)}$.

We prove third that $N\in {\cal G}_{4, 7}^{(2)}$ $(S\in7_{\rm H})$. Let $x_1\mid x_2$, $x_1=\hbox{gcd}(x_3,x_4)=\hbox{gcd}(x_3,x_5)=\hbox{gcd}(x_4,x_5)=\hbox{gcd}(x_3,x_6)=\hbox{gcd}(x_4,x_6)$, $x_2=\hbox{gcd}(x_5,x_6)$ and $\hbox{lcm}(x_3,x_4,x_5,x_6)\mid x_7$. Thus $x_2=ax_1$, $x_3=bx_1$, $x_4=cx_1$, $x_5=adx_1$, $x_6=aex_1$, where $a,b,c,d,e\geq2$. Since $\gcd(e,d)=1$, we have either $d>2$ or $e>2$ and further $de-d-e>0$. In addition, since $b-1,c-1>0$ and $x_1,x_7>0$ we have
\begin{eqnarray}\label{eq:4.13}
&&\frac{1}{x_7}-\frac{1}{x_6}-\frac{1}{x_5}-\frac{1}{x_4}-\frac{1}{x_3}+\frac{1}{x_2}+\frac{2}{x_1}= \\
&&\qquad\qquad=\;\frac{1}{x_7}+\frac{-bcd-bce-abde-acde+bcde+2abcde}{abcdex_1} \nonumber \\
&&\qquad\qquad=\;\frac{1}{x_7}+\frac{bc(de-d-e)+abde(c-1)+acde(b-1)}{abcdex_1}>0.\nonumber
\end{eqnarray}
Thus $N\in {\cal G}_{4, 7}^{(2)}$.
\end{proof}

\subsection{Cases \texorpdfstring{$n=8,9,\ldots$}{n=8,9,...}}\label{subsect:4.8}

Haukkanen, Wang and Sillanp\"a\"a \cite{HauWanSil} showed that the Bourque-Ligh conjecture is false by giving the counterexample \[S=\{1, 2, 3, 4, 5, 6, 10, 45, 180\},\] where $n=9$. Hong \cite{Hong99} solved the conjecture completely (in a sense) showing that it holds for $n\leq7$ and does not hold generally for $n\geq8$. The counterexample given by Hong is
\begin{align*}
S&=\{1,2,3,5,36,230,825,227700\}\\&=\{1,2,3,5,6(2\cdot3),23(2\cdot5),55(3\cdot5),(6\cdot23\cdot55)(2\cdot3\cdot5)\}.
\end{align*}
For this counterexample given by Hong \cite{Hong99} we have $S\in {\cal S}_{3, 8}$ with $[S]$ being singular. Thus $N\not\in {\cal G}_{3, 8}$ and, more general, $N\not\in{\cal F}_3$. For any $n\ge 8$ we are also able to construct a gcd-closed set $S$ possessing the structure given on the left side of Figure 9 as a subsemilattice, which makes the LCM matrix $[S]$ singular. These counterexamples together with Corollaries \ref{cor:4.1}--\ref{cor:4.5} serve as a lattice-theoretic solution of the Bourque-Ligh conjecture.

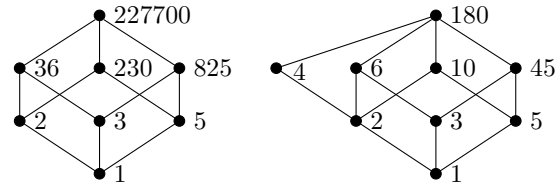
\begin{figure}[htb!]
\centering
\subfigure
{{\scalefont{0.8}
\begin{tikzpicture}[scale=0.7]
\draw (1,0)--(-0.5,1)--(-0.5,2)--(0.92,2.92);
\draw (1,0)--(1,1)--(-0.5,2);
\draw (-0.5,1)--(1,2)--(1,2.9);
\draw (1,0)--(2.5,1)--(2.5,2)--(1.08,2.92);
\draw (1,1)--(2.5,2);
\draw (2.5,1)--(1,2);
\draw [fill] (-0.5,1) circle [radius=0.1];
\draw [fill] (1,1) circle [radius=0.1];
\draw [fill] (2.5,1) circle [radius=0.1];
\draw [fill] (1,0) circle [radius=0.1];
\draw [fill] (-0.5,2) circle [radius=0.1];
\draw [fill] (1,2) circle [radius=0.1];
\draw [fill] (2.5,2) circle [radius=0.1];
\draw [fill] (1,3) circle [radius=0.1];
\node [right] at (1.1,0) {$1$};
\node [right] at (-0.4,1) {$2$};
\node [right] at (1.1,1) {$3$};
\node [right] at (2.6,1) {$5$};
\node [right] at (-0.4,2) {$36$};
\node [right] at (1.1,2) {$230$};
\node [right] at (2.6,2) {$825$};
\node [right] at (1.1,3) {$227700$};
\end{tikzpicture}}
}
\subfigure
{{\scalefont{0.8}
\begin{tikzpicture}[scale=0.7]
\draw (1,0)--(-0.5,1)--(-0.5,2)--(0.92,2.92);
\draw (1,0)--(1,1)--(-0.5,2);
\draw (-0.5,1)--(1,2)--(1,2.9);
\draw (1,0)--(2.5,1)--(2.5,2)--(1.08,2.92);
\draw (1,1)--(2.5,2);
\draw (-0.5,1)--(-2,2)--(1,3);
\draw (2.5,1)--(1,2);
\draw [fill] (-0.5,1) circle [radius=0.1];
\draw [fill] (1,1) circle [radius=0.1];
\draw [fill] (2.5,1) circle [radius=0.1];
\draw [fill] (1,0) circle [radius=0.1];
\draw [fill] (-0.5,2) circle [radius=0.1];
\draw [fill] (1,2) circle [radius=0.1];
\draw [fill] (2.5,2) circle [radius=0.1];
\draw [fill] (-2,2) circle [radius=0.1];
\draw [fill] (1,3) circle [radius=0.1];
\node [right] at (1.1,0) {$1$};
\node [right] at (-0.4,1) {$2$};
\node [right] at (1.1,1) {$3$};
\node [right] at (2.6,1) {$5$};
\node [right] at (-0.4,2) {$6$};
\node [right] at (1.1,2) {$10$};
\node [right] at (2.6,2) {$45$};
\node [right] at (-1.85,1.92) {$4$};
\node [right] at (1.1,3) {$180$};
\end{tikzpicture}}
}
\caption*{{\bf Figure 9.} On the left is the counterexample for the Bourque-Ligh conjecture given by Hong. The lattice on the right is the counterexample by Haukkanen, Wang and Sillanp\"a\"a.}
\end{figure}

\noindent
{\bf Acknowledgements} We wish to thank Jori M\"antysalo for valuable help with Sage. We also wish to thank the anonymous referee, who a couple of years ago gave many useful comments and suggestions regarding an earlier version of this article.

\section*{References}

\end{document}